\newtheorem{defn}{Definition}
\newtheorem{thm}{Theorem}
\newtheorem{lem}{Lemma}
\newtheorem{prop}{Proposition}
\newtheorem{cor}{Corollary}
\newtheorem{note}{Note}
\newtheorem{rem}{Remark}
\newtheorem{exam}{Example}
\newtheorem{assump}{Assumption}
\newtheorem{prob}{Problem}
\newcommand{\e}{\varepsilon}
\newcommand{\G}{\mathcal{G}}
\newcommand{\HH}{\mathcal{H}}
\newcommand{\V}{\mathbb{V}}
\newcommand{\E}{\mathcal{E}}
\newcommand{\R}{\mathbb{R}}
\newcommand{\PP}{\mathbb{P}}
\newcommand{\IM}{\mathrm{Im}}
\newcommand{\Proj}{\mathrm{Proj}}
\begin{document}

\title{Network Identification: A Passivity and Network Optimization Approach}

\author{Miel Sharf and Daniel Zelazo
\thanks{M. Sharf and D. Zelazo are with the Faculty of Aerospace Engineering, Israel Institute of Technology, Haifa, Israel.
    {\tt\small msharf@tx.technion.ac.il, dzelazo@technion.ac.il}.  This work was supported by the German-Israeli Foundation for Scientific Research and Development.}
}

\maketitle

\begin{abstract}
The theory of network identification, namely identifying the interaction topology among a known number of agents, has been widely developed for linear agents over recent years. However, the theory for nonlinear agents remains less extensive. We use the notion maximal equilibrium-independent passivity (MEIP) and network optimization theory to present a network identification method for nonlinear agents. We do so by introducing a specially designed exogenous input, and exploiting the properties of networked MEIP systems. We then specialize on LTI agents, showing that the method gives a distributed cubic-time algorithm for network reconstruction in that case. We also discuss different methods of choosing the exogenous input, and provide an example on a neural network model. 
\end{abstract}

\section{Introduction}\label{sec.Intro}
Multi-agent systems have been widely studied in recent years, as they present both a variety of applications and a deep theoretical framework. They have been employed across numerous domains, including flocking, formation control, robotics rendezvous, social networks, and distributed estimation \cite{Mesbahi_Egerstedt, Liu2011} \if(0), Sabater2002} \fi. One of the most important aspects in multi-agents systems, both in theory and in practice, is the information-exchange layer, governing which agents interact with each other. Identifying the underlying network of a multi-agent system from measurements is of great importance in many applications. One example is systems biology, in which measurements are used to understand the connection between genes in regulatory networks \cite{BiologicalSystems,Fujita2007}. Another example is international finance, in which past exchange rates between different currencies are used to determine their influence on one another, giving a useful guide for understanding the causal relationship between individual currencies \cite{ExchangeRate}. Other fields with similar problems include social networks \cite{zheleva2012privacy}, physics \cite{\if(0) Boccaletti_Ivanchenko_Latora_Pluchino_Rapisarda,\fi Timme}, neuroscience \cite{Sakkalis,Bressler_Seth}, communication networks \cite{NetworkCoding,FourierMethods} and ecology \cite{\if(0) Monestiez_Bailly_Lagacherie_Voltz,\fi Urban_Keitt}.

The problem of network identification has been widely studied for linear agents. Seminal works dealing with network identification include \cite{Materassi_Innocenti,materassi2009unveiling}, providing exact reconstruction for tree-like graphs, and \cite{Sanandaji_Vincent_Wakin} in which sparse enough topologies can be identified from a small number of observations.  Other important works include \cite{NodeKnockout}, using a node knockout method, and \cite{SieveMethod}, presenting a sieve method for solving the network identification problems for consensus-seeking networks. More recent methods include auto-regressive models \cite{AutoRegressiveModels} and spectral methods \cite{SpectralMethods}. However, a theory for network identification for interacting nonlinear agents is far less developed. We aim to provide in this work a network identification scheme for a wide range of systems, including nonlinear ones.  Our approach relies on a concept widespread in multi-agent systems, namely passivity theory.

Passivity theory is a cornerstone of the theoretical frame work of networks of dynamical systems \cite{Bai_Arcak_Wen}. The main reason is that it allows for the analysis of multi-agent systems to be decoupled into two separate layers, the dynamic system layer and the information exchange layer. Passivity theory was first used to study the convergence properties of network systems in \cite{Arcak}. Many variations and extensions of passivity have been applied in different aspects of multi-agent systems. For example, the related concepts of incremental passivity or relaxed co-coercivity have been used to study various synchronization problems \cite{Stan2007, Scardovi2010}, and more general frameworks including Port-Hamiltonian systems on graphs \cite{Schaft2012}.

One prominent variant is maximal equilibrium-independent passivity (MEIP), which was applied in \cite{SISO_Paper} in order to reinterpret the analysis problem for multi-agent system as a network optimization problem. Network optimization is a branch of optimization theory dealing with optimization of functions defined over graphs \cite{Rockafellar1998}. The main result of \cite{SISO_Paper} showed that the asymptotic behavior of these networked systems is \emph{(inverse) optimal} with respect to a family of network optimization problems. In fact, the steady-state input-output signals of both the dynamical systems and the controllers comprising the networked system can be associated to the optimization variables of either an \emph{optimal flow} or an \emph{optimal potential} problem; these are the two canonical dual network optimization problems described in \cite{Rockafellar1998}. The results of \cite{SISO_Paper} were used in \cite{LCSS_Paper,TAC_Paper} in order to solve the synthesis problem for multi-agent systems.

We aim to use this network optimization framework to provide a network identification scheme for multi-agent systems. We do so by injecting a constant exogenous output, and tracking the output of the agents. By appropriately designing the exogenous input, we are able to differentiate the outputs of the closed-loop system associated to different underlying graphs. The key idea in the proof is that the steady-state outputs are solutions to network optimization problems and they are one-to-one dependent on the exogenous input. 
Our contributions are stated as follows:
\begin{itemize}
\item[i)] We introduce the notion of \emph{indication vectors} for MEIP systems that are used for differentiating the output of networked systems with different underlying graphs.
\item[ii)] We propose various methods for constructing these indication vectors.
\item[iii)] We propose an algorithm exploiting the notion of indication vectors to \emph{solve} a network detection problem. 
\item[iv)] We show that in the case of linear time-invariant (LTI) systems, our solution gives a distributed $O(n^3)$ network detection algorithm, where $n$ is the number of agents.
\end{itemize}
\if(0)
While the results developed in this work are for general systems satisfying the MEIP property, we also treat the special case of linear time-invariant (LTI) systems, allowing us to prove stronger results. Namely, we show that the solution to the network detection problem can be distributed, and its complexity is $O(n^3)$, where $n$ is the number of agents.
\fi
The rest of the paper is organized as follows. Section \ref{sec.background} surveys the relevant parts of the network optimization framework. Section \ref{sec.ProblemFormulation} presents the problem formulation. Section \ref{sec.IndicationVectors} presents the main technical tool used for building the network detection schemes, namely indication vectors, and shows different methods of constructing indication vectors. Section \ref{sec.Applications} uses indication vectors to design a network detection scheme for general MEIP agents. Lastly, we present a case study simulating the network detection methods discussed on a neural network.

\paragraph*{Notations}
We use basic notations from linear algebra. For a linear map $T:U\to V$ between vector spaces, we denote the kernel of $T$ by $\ker{T}$, and the image of $T$ by $\IM(T)$. Furthermore, if $U$ is a subspace of an inner-product space $X$ (e.g., $\mathbb{R}^d$), we denote the orthogonal complement of $U$ by $U^{\perp}$. The notation $A\ge 0$ ($A>0$) means the matrix $A$ is positive semi-definite (positive definite). 
We also use basic notions from algebraic graph theory \cite{Godsil_Royle}. An undirected graph $\mathcal{G}=(\mathbb{V},\mathbb{E})$ consists of a finite set of vertices $\mathbb{V}$ and edges $\mathbb{E} \subset \mathbb{V} \times \mathbb{V}$.  We denote by $k=\{i,j\} \in \mathbb{E}$ the edge that has ends $i$ and $j$ in $\mathbb{V}$. For each edge $k$, we pick an arbitrary orientation and denote $k=(i,j)$. 
The incidence matrix of $\mathcal{G}$, denoted $\mathcal{E}_\mathcal{G}\in\mathbb{R}^{|\mathbb{E}|\times|\mathbb{V}|}$, is defined such that for edge $k=(i,j)\in \mathbb{E}$, $[\mathcal{E_\G}]_{ik} =+1$, $[\mathcal{E_\G}]_{jk} =-1$, and $[\mathcal{E_\G}]_{\ell k} =0$ for $\ell \neq i,j$.

\section{Network Optimization and\\ MEIP Multi-Agent Systems}\label{sec.background}

The role of network optimization theory in cooperative control was introduced in \cite{SISO_Paper}, and was used in \cite{LCSS_Paper,TAC_Paper} to solve the synthesis problem for multi-agent systems.  In this section, we provide an overview of the main results from these works. 

\subsection{The Closed-Loop and Steady-States}
Consider a collection of agents interacting over a network $\mathcal{G}=(\mathbb{V},\mathbb{E})$.  Assign to each node $i\in \mathbb{V}$ (the agents) and each edge $e \in \mathbb{E}$ (the controllers) the dynamical systems,
\begin{align}
	\Sigma_i:
	\left\{\begin{array}{c}
		\dot{x}_i = f_i(x_i,u_i) \\
		y_i = h_i(x_i,u_i)
	\end{array}\right., \,
	\Pi_e: 
	\left\{\begin{array}{c}
		\dot{\eta}_e = \phi_e(\eta_e,\zeta_e) \\
		\mu_e = \psi_e(\eta_e,\zeta_e)
		\end{array} \right. .
\end{align}
 We consider stacked vectors of the form $u=[u_1^T,\ldots,u_{|\mathbb{V}|}^T]^T$ and similarly for $y,\zeta$ and $\mu$ and the operators $\Sigma$ and $\Pi$. The network system is diffusively coupled with the controller input described by $\zeta = \E_\G^Ty$, and the control input to each system by $u = -\E_\G\mu$.  This structure is illustrated in Fig. \ref{ClosedLoop} and we denote the closed-loop system above by the  triple $(\G,\Sigma,\Pi)$.

\begin{figure} [!t] 
    \centering
    \includegraphics[scale=0.4]{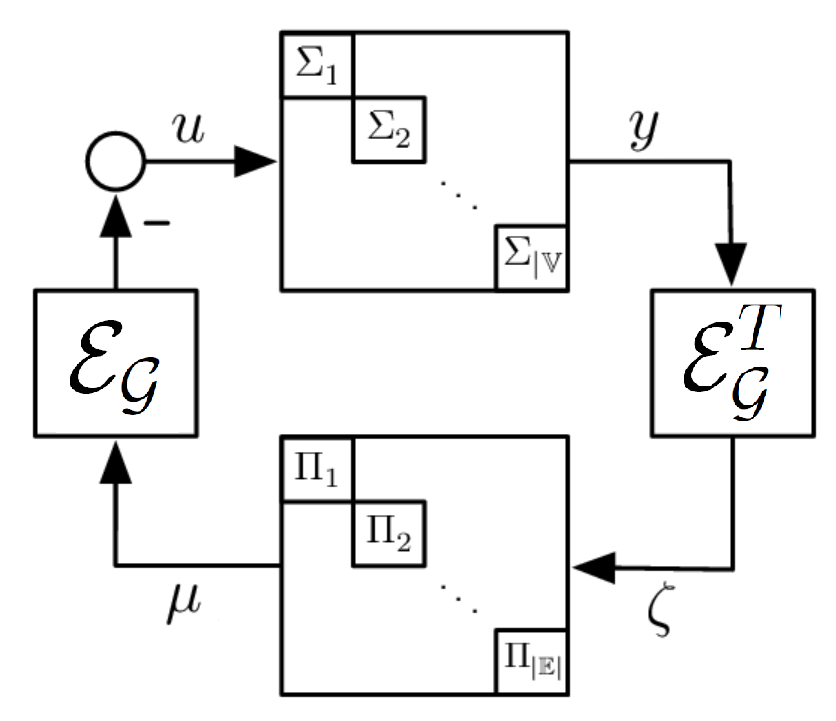}
    \vspace{-10pt}
    \caption{Block-diagram of the closed loop.}
    \label{ClosedLoop}
    \vspace{-15pt}
\end{figure}

Of interest for these systems are the steady-state solutions, if they exist, of the closed-loop. 
Suppose that $(\mathrm{u},\mathrm{y},\zeta,\mu)$ is a steady-state of the system. Then $(\mathrm{u}_i,\mathrm{y}_i)$ is a steady-state input-output pair of the $i$-th agent, and $(\zeta_e,\mu_e)$ is a steady-state pair of the $e$-th edge. This motivates the following definition, originally introduced in \cite{SISO_Paper}.
\begin{defn}
The \emph{steady-state input-output relation} $k$ of a dynamical system is the collection of all steady-state input-output pairs of the system. Given a steady-state input $\mathrm{u}$ and a steady-state $\mathrm{y}$, we define
\begin{align}
k(\mathrm u) = \{\mathrm y:\ (\mathrm{u,y})\in k\} \text{ and }\;
k^{-1}(\mathrm y) = \{\mathrm u:\ (\mathrm{u,y})\in k\}. \nonumber
\end{align}
\end{defn}
Let $k_i$ be the steady-state input-output relation for the $i$-th agent, $\gamma_e$ be the steady-state input-output relation for the $e$-th controller, and $k,\gamma$ be their stacked versions. Then, the network interconnection shown in Fig.\ref{ClosedLoop} imposes on the closed-loop steady-states $(\mathrm{u},\mathrm{y},\zeta,\mu)$ that $\mathrm y\in k(\mathrm u)$, $\zeta=\E_\G^T\mathrm y$, $\mu\in\gamma(\zeta)$, and $\mathrm u=-\E_\G\mu$. Equivalently stated, $\mathrm{y}$ is a steady-state for the system $(\mathcal{G},\Sigma, \Pi)$ if and only if $$0\in k^{-1}(\mathrm y) + \E_\G\gamma(\E_\G^T\mathrm y).$$
The above expression summarizes both the dynamic and algebraic constraints that must be satisfied by the network system to achieve a steady-state solution.

\subsection{MEIP Systems and Convergence of the Closed-Loop}
Convergence of the system $(\mathcal{G},\Sigma,\Pi)$ can be guaranteed under a passivity assumption on the agent and controller dynamics \cite{SISO_Paper}.  
\if(0)
Consider the dynamical system of the form
\begin{align} \label{Upsilon}
\Upsilon : \begin{cases}
\dot{x} = f(x,u) \\
y = h(x,u),
\end{cases}
\end{align}
where $u \in \mathbb{R}$ is the input and $y \in \mathbb{R}$ is the output. In \cite{Hines}, the notion of \emph{equilibrium independent passivity} (EIP) was introduced.  EIP systems requires the existence of a continuous and monotone function that maps constant input signals to constant output signals, i.e., an equilibrium input-output map, and that the system is passive with respect to these input-output pairs.  
An extension of this notion proposed in \cite{SISO_Paper} is \emph{maximal equilibrium independent passivity} (MEIP). For MEIP, we consider the set of all pairs $(u_{ss},y_{ss})$ of steady-state inputs and outputs for $\Upsilon$ and denote this by the \emph{relation} $k_\Upsilon$. Thus, the set of all the steady-state outputs associated with the input $u$ is $k_\Upsilon(u)$, and the set of all steady-state inputs associated with the output $y$ by $k_\Upsilon^{-1}(y)$. These are both set-valued maps, as their image can have more than one point, or no points at all. For example, if $\Upsilon$ is the simple integrator $\dot{x}=u,\: y=x$, then $k_\Upsilon = \{(0,y):\: y\in\mathbb{R}\}$.  This is the main distinguishing point between EIP and MEIP.  In EIP, it is required that the steady-state input-output maps are \emph{functions}, while for MEIP they should be \emph{relations}.  Consequently, the integrator example is not EIP.
\fi 
\begin{defn}[\small{Maximal Equilibrium Independent Passivity \cite{SISO_Paper}}]
Consider the dynamical system of the form
\begin{align} \label{Upsilon}
\Upsilon : \begin{cases}
\dot{x} = f(x,u) \\
y = h(x,u),
\end{cases}
\end{align}
with steady-state input-output relation $r$.
The system $\Upsilon$ is said to be \emph{(output-strictly) maximal equilibrium independent passive} (MEIP) if the following conditions hold:
\begin{enumerate}
\item[i)] The system $\Upsilon$ is (output-strictly) passive with respect to any steady state pair $(\mathrm u,\mathrm y) \in r$.
\item[ii)] The relation $r$ is maximally monotone.  That is, if $(\mathrm u_1,\mathrm y_1),(\mathrm u_2,\mathrm y_2)\in r$ then either $(\mathrm u_1\le \mathrm u_2 \text{\: and\:} \mathrm y_1\le \mathrm y_2)$, or $(\mathrm u_1 \ge \mathrm u_2 \text{\: and\:} \mathrm y_1 \ge \mathrm y_2)$, and $r$ is not contained in any larger monotone relation \cite{Rockafellar_Theorem}.
\end{enumerate}
\end{defn}
Such systems include simple integrators, gradient systems, Hamiltonian systems on graphs, and others (see \cite{SISO_Paper, TAC_Paper} for more examples). We remark that the monotonicity requirement is used to prove existence of a closed-loop steady-state, see \cite{SISO_Paper} or \cite{TAC_Paper} for more details.  

\begin{thm}[\cite{SISO_Paper,LCSS_Paper}]\label{thm.ClosedLoopSteadyStates}
Consider the closed-loop system $(\G,\Sigma,\Pi)$. Assume that the agents $\Sigma_i$ are MEIP, and that the agents $\Pi_e$ are output-strictly MEIP. Then the signals $u,y,\zeta,\mu$ of the closed-loop system converge to some steady-state values ${\mathrm u},{\mathrm y},{\mathrm \zeta},{\mathrm \mu}$ satisfying $0\in k^{-1}({\mathrm y}) + \E_\G \gamma(\E_\G^T {\mathrm y})$.
\end{thm}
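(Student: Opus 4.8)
The plan is to convert the dynamic assertion into a static convex-optimization statement and then close the loop with a passivity-based Lyapunov argument. First I would use the MEIP hypotheses to pass to \emph{integral functions}: since each $\Sigma_i$ is MEIP, its steady-state relation $k_i$ is maximally monotone, so by Rockafellar's theorem $k_i=\partial K_i$ for a closed proper convex $K_i:\R\to\R\cup\{+\infty\}$, and likewise $\gamma_e=\partial\Gamma_e$ for the output-strictly MEIP controllers. Setting $K=\sum_i K_i$ and $\Gamma=\sum_e\Gamma_e$, the inverse relation satisfies $k^{-1}=\partial K^\star$, and the chain rule gives $\E_\G\,\partial\Gamma(\E_\G^T y)=\partial_y\big(\Gamma(\E_\G^T y)\big)$. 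Hence the algebraic steady-state condition $0\in k^{-1}(\mathrm y)+\E_\G\gamma(\E_\G^T\mathrm y)$ is precisely the first-order optimality condition of the unconstrained convex program $\min_{y}\ K^\star(y)+\Gamma(\E_\G^T y)$ (the \emph{optimal potential problem}), whose Fenchel dual is the \emph{optimal flow problem} $\min_{\mu}\ \Gamma^\star(\mu)+K(-\E_\G\mu)$. So it remains to show (a) this primal–dual pair attains its optimum, which is exactly the place where \emph{maximality} of the monotone relations is used, since it controls the effective domains and growth of $K,\Gamma$ and hence coercivity along the subspaces $\IM(\E_\G^T)$ and $\ker(\E_\G)$; and (b) the closed-loop trajectory converges to such a primal optimizer.

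For (b) I would fix a steady state $(\mathrm u,\mathrm y,\mathrm\zeta,\mathrm\mu)$ provided by (a), take storage functions $S_i(x_i)\ge 0$ certifying passivity of $\Sigma_i$ at $(\mathrm u_i,\mathrm y_i)$ and $W_e(\eta_e)\ge 0$ certifying output-strict passivity of $\Pi_e$ at $(\mathrm\zeta_e,\mathrm\mu_e)$, with rate $\rho_e>0$, and use the Lyapunov candidate $V=\sum_i S_i+\sum_e W_e$. Adding the dissipation inequalities and substituting the interconnection relations $u-\mathrm u=-\E_\G(\mu-\mathrm\mu)$ and $\zeta-\mathrm\zeta=\E_\G^T(y-\mathrm y)$, the bilinear cross terms satisfy $(u-\mathrm u)^T(y-\mathrm y)+(\zeta-\mathrm\zeta)^T(\mu-\mathrm\mu)=0$, so that $\dot V\le -\sum_e\rho_e\|\mu_e-\mathrm\mu_e\|^2\le 0$. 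This yields boundedness of all signals, and LaSalle's invariance principle forces convergence to the largest invariant set on which $\mu\equiv\mathrm\mu$, hence $u\equiv-\E_\G\mu\equiv\mathrm u$. On that set the agents are driven by the constant input $\mathrm u$; one then argues that their outputs settle to a value $\mathrm y_\infty\in k(\mathrm u)$, and passing to the limit in the interconnection equations gives $0\in k^{-1}(\mathrm y_\infty)+\E_\G\gamma(\E_\G^T\mathrm y_\infty)$, as claimed.

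The main obstacle is the final implication in (b): upgrading ``$\mu$ converges'' to ``$y$, and the full closed-loop state, converges.'' Output-strict passivity only pins down the controller outputs, so one needs an additional detectability/zero-state-observability type property of the MEIP agents — or, following the cited references, the observation that the dynamics restricted to the LaSalle invariant set is itself (up to a change of coordinates) a gradient flow of the optimal potential problem, which then converges because the objective is convex and bounded below. A secondary technical point, easy to overlook amid the routine dissipation bookkeeping, is verifying that the integral functions $K_i,\Gamma_e$ are genuinely proper and that the optimal potential problem attains its infimum rather than merely having an infimal value; both rest on maximality of the monotone relations, so that is where I would spend the care.
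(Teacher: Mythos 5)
This theorem is imported verbatim from the cited references and the paper offers no proof of it, so there is nothing internal to compare against; your sketch is, in outline, exactly the argument used in those sources: Rockafellar's theorem turns the maximally monotone relations $k$ and $\gamma$ into subdifferentials of convex integral functions, the steady-state inclusion $0\in k^{-1}(\mathrm y)+\E_\G\gamma(\E_\G^T\mathrm y)$ becomes the optimality condition of the optimal potential problem (with the optimal flow problem as its dual), and convergence follows from summing the agent and controller storage functions, cancelling the cross terms via $u=-\E_\G\mu$, $\zeta=\E_\G^Ty$, and invoking LaSalle. The one step you flag as an obstacle is a genuine one and is precisely where the cited proofs do extra work: output-strictness on the controller side only yields $\dot V\le-\sum_e\rho_e\|\mu_e-\mathrm\mu_e\|^2$, which pins down $\mu$ (hence $u$) but not $y$, and upgrading this to convergence of the agent outputs requires either an observability-type argument on the invariant set or the dual formulation in which the dissipation is expressed in the variables that appear in the optimization problem; your honest identification of this, and of the properness/attainment issue for $K^\star(y)+\Gamma(\E_\G^Ty)$, matches where the care is actually spent in the references. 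So the proposal is a faithful reconstruction of the cited proof's architecture rather than a new route, with the acknowledged gaps being exactly the ones the original proofs close.
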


\section{Motivation and Problem Formulation} \label{sec.ProblemFormulation}
The problem of network identification we aim to solve can be stated as follows. Given a multi-agent system $(\mathcal{G},\Sigma,\Pi)$, determine the underlying graph structure $\G$ from the network measurements and an appropriately designed exogenous input $w$. Many works on network identification consider networks of consensus-seeking agents \cite{NodeKnockout,SieveMethod},
\begin{align} \label{eq.Consensus}
\dot{x}_i = \sum_{\{i,j\}\in\mathbb{E}} \alpha_{ij}(x_j - x_i) + B_i w_i,
\end{align}
where $w_i$ is the controlled exogenous input for the $i$-th agent, and $\alpha_{ij} = \alpha_{ji}$ are the coupling coefficients. We consider a more general case of (possibly nonlinear) agents interacting over a modified protocol,
\begin{align} \label{eq.System}
\dot{x}_i = f_i(x_i) + \sum_{\{i,j\}\in\mathbb{E}} \alpha_{ij}g_{ij}(h_j(x_j) - h_i(x_i)) + B_i w_i,
\end{align}
where $x_i \in \R$ , and $f_i,g_{ij},h_i:\R\to\R$ are smooth functions. \footnote{The functions $g_{ij}$ are defined for all pairs, even those absent from the underlying graph. It is often assumed in multi-agent systems that each agent knows to run a given protocol (i.e., consensus).}  
Examples of systems governed by \eqref{eq.System}, for appropriate choice of functions $f_i,g_{ij},h_i$, include traffic control models \cite{Traffic}, neural networks \cite{Franci_Scardovi_Chaillet}, and the Kuramoto model for synchronizing oscillators \cite{DORFLER20141539}. We let $f,g,h$ denote the stacked versions of $f_i,g_{ij},h_i$.

In this work, we shall restrict ourselves to the case of $\alpha_{ij} = 1$, i.e., of unweighted graphs \cite{NodeKnockout}. Furthermore, in the model \eqref{eq.Consensus}, the standard assumption is that only certain agents can be controlled using the exogenous input $w_i$ (i.e., $B_i=0$ is possible), and one can observe the outputs of only certain agents. To simplify the presentation, we assume that the exogenous output $w_i$ can be added to all agents, and that the output of all agents can be observed. In that case, we can assume without loss of generality that $B_i = 1$.

We note that the system \eqref{eq.System} is a special case of the closed-loop presented in Fig. \ref{ClosedLoop}, where the agents and the controllers are given by
\begin{align} \label{eq.ClosedLoopStruc}
	\Sigma_i:
	\left\{\hspace{-5pt}\begin{array}{cl}
		\dot{x}_i &\hspace{-5pt}= f_i(x_i)+u_i+w_i \\
		y_i &\hspace{-5pt}= h_i(x_i)
	\end{array}\right.\hspace{-5pt}, 
	\hspace{5pt}\Pi_{ij}: 
	\hspace{-5pt}\begin{array}{cl}
		\zeta_{ij} = g_{ij}(\mu_{ij})
		\end{array} \hspace{-5pt},
\end{align}
and the network is connected using the diffusive coupling $\zeta=\E_\G^Ty$ and $u=-\E_\G \mu$. We would like to use the mechanisms presented in Section \ref{sec.background} to establish network identification results. We make the following assumptions on the agents and controllers, allowing us to use the framework presented in section \ref{sec.background}.  With this model, we will often write the closed-loop as $(\G,\Sigma,g)$.
\begin{assump} \label{Assumption1}
The systems $\Sigma_i$, for all $i \in \mathbb{V}$, are output-strictly MEIP. Furthermore, the controllers  $\Pi_e$, for all $e \in \mathbb{E}$, are MEIP, i.e., $g_{ij}$ are monotone ascending functions.
\end{assump}
\begin{assump} \label{Assumption2}
The inverse of the steady-state input-output relation for each agent, $k_i^{-1}(\mathrm y_i)$, is a smooth function of $\mathrm y_i$. Furthermore, we assume that $g_{ij}(\zeta_{ij})$ is a smooth function of $\zeta_{ij}$, and that the derivative $\frac{dg_{ij}}{d\zeta_{ij}} > 0$ for all $\zeta_{ij} \in \R$.
\end{assump}
Assumption \ref{Assumption2} implies that the integral function $K_i^\star$ associated with $k_i^{-1}$ \cite{SISO_Paper} is smooth and $\nabla K_i^\star = k_i^{-1}$. The assumption on $g_{ij}$ implies that $g_{ij}$ is strictly monotone ascending, and the stronger assumption is made mainly to avoid heavy technical tools.

We will also consider the special case where the agents and controllers are described by linear and time-invariant (LTI) dynamics.  For such systems, the input-output relation $k_i$ for each agent is linear and strictly monotone, and so is the function $g_{ij}$.  When $\Sigma_i$ is an integrator, the input-output relation is given as $\{(0,\mathrm y):\ \mathrm y\in\R\}$. In these cases, $k^{-1}_i$ is a linear function over $\R$. In particular, $k^{-1}_i(\mathrm x_i)=a_i\mathrm x_i$ for some constant $a_i \ge 0$.  We can then define the matrix $A=\mathrm{diag}(a_1,\ldots,a_n)$ such that $k^{-1}(\mathrm x)=A\mathrm x$. Similarly, we denote $g_{ij}(\mathrm x_{ij}) = b_{ij} \mathrm{x}_{ij}$, where $b_{ij} > 0$, and $B=\mathrm{diag}(\cdots,b_{ij},\cdots) >0$. 

We can now formulate two fundamental problems of network detection that we will consider.
\begin{prob} \label{prob.network_differentiation} Consider the network system $(\G, \Sigma, \Pi)$ of the form \eqref{eq.System} satisfying Assumptions \ref{Assumption1} and \ref{Assumption2} with known steady-state input-output relations for the agents and controllers.  Design the control inputs $w_i$ so that it is possible to differentiate the network system $(\G, \Sigma, \Pi)$ from the network system $(\mathcal{H}, \Sigma, \Pi)$, when $\mathcal{H}\ne\G$.
\end{prob}

\begin{prob} \label{prob.network_detection2}
Consider the network system $(\G, \Sigma, \Pi)$ of the form \eqref{eq.System} satisfying Assumptions \ref{Assumption1} and \ref{Assumption2} with known steady-state input-output relations for the agents and controllers, but unknown network structure $\G$.  Design the control inputs $w_i$ such that together with the output measurements of the network, it is possible to reconstruct the graph $\G$. 
\end{prob}

We aim for a solution of both problems, starting with the Problem \ref{prob.network_differentiation} in the sequel. We will later show how to augment the algorithm solving Problem \ref{prob.network_differentiation} to solve the harder problem Problem \ref{prob.network_detection2}.

Note the framework developed in \cite{SISO_Paper} requires constant signals for exogoneous inputs.  Thus, we will consider constant $w_i$, and denote them as $\mathrm w_i$. 
For similar reasons, we can only consider the output measurements of the system in steady-state when reconstructing the graph $\G$. However, in practice we may not be able to wait for the system to converge, and can only know the terminal state up to some approximation error. 
We will deal with this issue by giving a bound on the error one can tolerate. 

\section{Distinguishing Between Different Networks} \label{sec.IndicationVectors}
In this section, we develop the notion of indication vectors used for solving Problem \ref{prob.network_differentiation}, and provide different methods for constructing them. 

\subsection{The Basic Equation and Indication Vectors}
We consider a network system of the form \eqref{eq.ClosedLoopStruc}. We first study constant exogenous input vectors $\mathrm{w}$ that can differentiate between two different network systems $(\G,\Sigma,\Pi)$ and $(\mathcal{H},\Sigma,\Pi)$.  In this direction, we provide a result relating the constant exogenous inputs $\mathrm{w}$ to the network steady-states.
\begin{prop} \label{prop.Indication}
Under Assumptions \ref{Assumption1} and \ref{Assumption2}, for any $\mathrm w\in\mathbb{R}^{n}$, the vector $\mathrm{y}\in\mathbb{R}^n$ is a steady-state of the closed loop system $(\G,\Sigma,\Pi)$ if and only if
\begin{align}\label{eq.SteadyState}
k^{-1}(\mathrm{y})+\E_\G g\left(\E_\G^T\mathrm{y}\right)= -\mathrm w.
\end{align}
\end{prop}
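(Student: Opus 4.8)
The plan is to adapt the steady-state characterization recalled in Section \ref{sec.background} --- that $\mathrm y$ is a steady-state of a closed loop $(\G,\Sigma,\Pi)$ if and only if $0\in k^{-1}(\mathrm y)+\E_\G\gamma(\E_\G^T\mathrm y)$ --- to the present situation \eqref{eq.ClosedLoopStruc}, in which two things are different. First, the controllers $\Pi_{ij}$ are \emph{static}, so their internal dynamics are vacuous and, by Assumption \ref{Assumption2}, their steady-state relation is the single-valued strictly increasing smooth function $\gamma_{ij}=g_{ij}$; this collapses the set-valued inclusion to an equation. Second, each agent is driven by the constant exogenous input $\mathrm w_i$, which merely translates its steady-state input-output relation. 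I would then verify both implications directly from the closed-loop equations.

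For the forward implication, take a constant trajectory of $(\G,\Sigma,\Pi)$ with agent states $\mathrm x$, output $\mathrm y$, coupling input $\mathrm u$, controller input $\zeta$, and controller output $\mu$. Setting $\dot{x}_i=0$ in \eqref{eq.ClosedLoopStruc} together with $\mathrm y_i=h_i(\mathrm x_i)$ exhibits a steady-state input-output pair of agent $i$, so single-valuedness of $k_i^{-1}$ (Assumption \ref{Assumption2}) determines $\mathrm u_i$ in terms of $k_i^{-1}(\mathrm y_i)$ and the constant $\mathrm w_i$; stacked, this expresses $\mathrm u$ through $k^{-1}(\mathrm y)$ and $\mathrm w$. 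On the other hand, the interconnection imposes $\zeta=\E_\G^T\mathrm y$, $\mu=g(\zeta)$, and $\mathrm u=-\E_\G\mu$, so $\mathrm u=-\E_\G g(\E_\G^T\mathrm y)$. Equating the two expressions for $\mathrm u$ and collecting the $\mathrm w$-terms gives \eqref{eq.SteadyState}.

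For the converse, I would run this backwards. Given $\mathrm y$ solving \eqref{eq.SteadyState}, Assumption \ref{Assumption2} provides, for each $i$, an equilibrium state $\mathrm x_i$ with $h_i(\mathrm x_i)=\mathrm y_i$ realizing the pair $(k_i^{-1}(\mathrm y_i),\mathrm y_i)\in k_i$; I set $\zeta=\E_\G^T\mathrm y$, $\mu=g(\zeta)$, and take the coupling input $\mathrm u$ to be the one that makes each agent equation $\dot{x}_i=0$ hold at $\mathrm x_i$. Equation \eqref{eq.SteadyState} is precisely the statement that this $\mathrm u$ also satisfies $\mathrm u=-\E_\G\mu$, so the constant signals $(\mathrm x,\mathrm u,\zeta,\mu,\mathrm y)$ satisfy every defining relation of $(\G,\Sigma,\Pi)$; that is, $\mathrm y$ is a steady-state.

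Since the statement is, in essence, the Section \ref{sec.background} characterization with the constant input folded in, I do not expect a substantive obstacle, only careful bookkeeping. Two points deserve attention. First, ``$\mathrm y$ is a steady-state'' must be read as the existence of a consistent constant trajectory of the \emph{full} closed loop, which is exactly why the converse has to produce the agent states $\mathrm x_i$ and why it leans on Assumption \ref{Assumption2}. Second, one must keep precise track of the sign with which $\mathrm w_i$ enters \eqref{eq.ClosedLoopStruc} and of the orientation convention for $\E_\G$, since together these fix the sign of $\mathrm w$ on the right-hand side of \eqref{eq.SteadyState}.
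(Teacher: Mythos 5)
Your proposal is correct and follows essentially the same route as the paper, which simply invokes the steady-state characterization $0\in k^{-1}(\mathrm y)+\E_\G\gamma(\E_\G^T\mathrm y)$ from Theorem \ref{thm.ClosedLoopSteadyStates}, absorbs the constant exogenous input via $k^{-1}(\mathrm y)=\mathrm u+\mathrm w$, and notes that Assumption \ref{Assumption2} turns the inclusion into an equality. Your version merely spells out both implications that the paper leaves implicit.
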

\begin{proof}
The result follows directly from Theorem \ref{thm.ClosedLoopSteadyStates} using $\gamma(\mathrm \zeta)=g(\mathrm \zeta$), $k^{-1}(\mathrm y)=\mathrm u+\mathrm w$ and the network connection $\zeta=\E^Ty,\ u=-\E\mu$. We note that this is an equality and not inclusion due to Assumption \ref{Assumption2}. 
\end{proof}
We denote a solution of \eqref{eq.SteadyState} as $\mathrm{y}_\G$. Furthermore, we note that \eqref{eq.SteadyState} is graph dependent, and that the steady-state output $\mathrm{y}_\G$ can be measured from the network (as the network converges to a steady-state by Theorem \ref{thm.ClosedLoopSteadyStates}). The idea now is to choose the bias vectors $\mathrm w$ wisely so that different graphs will have different terminal outputs. 
\begin{defn}\label{def.IndicationVectors}
Consider a closed-loop system of the form \eqref{eq.System} satisfying Assumptions \ref{Assumption1} and \ref{Assumption2}, where the controllers $g_{ij}$ have been determined for all possible pairs $\{i,j\}$. 
Let $\mathfrak{G}$ be a collection of graphs over $n$ vertices. A vector $\mathrm w\in\R^n$ is called a \emph{$\mathfrak{G}$-indication vector} if for any two graphs $\G,\mathcal{H} \in \mathfrak{G}$ with $\G \neq \mathcal{H}$, the steady-state output of $(\G,\Sigma,g)$ is different from the steady-state output of $(\mathcal{H},\Sigma,g)$. 
In other words, one has that $\mathrm{y}_{\G} \neq \mathrm{y}_{\mathcal{H}}$.
\end{defn}

Assume for now that the agents and controllers are LTI. We can now restate \eqref{eq.SteadyState} in a new manner, involving linear inequalities. In turn, this will manifest in stronger results later. The following result will be useful in the analysis.

\begin{prop}\label{prop.LTIinvert}
If $A \ne 0$, then for any connected graph $\G$, the matrix $S=A+\E_\G B \E_\G^T$ is invertible.
\end{prop}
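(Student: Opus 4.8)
The plan is to show that $S = A + \E_\G B \E_\G^T$ is positive definite whenever $\G$ is connected and $A \neq 0$, from which invertibility is immediate. Since $A = \mathrm{diag}(a_1,\dots,a_n)$ with each $a_i \ge 0$, $A$ is positive semidefinite; likewise $B > 0$ gives $\E_\G B \E_\G^T \ge 0$, being congruent to a positive definite matrix. Hence $S$ is a sum of two positive semidefinite matrices and is itself positive semidefinite. The work is to rule out a nontrivial kernel vector.

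First I would take $\mathrm x \in \R^n$ with $S\mathrm x = 0$ and form the quadratic form $\mathrm x^T S \mathrm x = \mathrm x^T A \mathrm x + \mathrm x^T \E_\G B \E_\G^T \mathrm x = 0$. Because both summands are nonnegative, each vanishes separately: $\mathrm x^T A \mathrm x = 0$ and $(\E_\G^T \mathrm x)^T B (\E_\G^T \mathrm x) = 0$. Since $B > 0$, the second equation forces $\E_\G^T \mathrm x = 0$, i.e. $\mathrm x \in \ker \E_\G^T$. For a connected graph, $\ker \E_\G^T$ is exactly the span of the all-ones vector $\mathbf 1$ (this is the standard fact that the incidence matrix of a connected graph has rank $n-1$), so $\mathrm x = c\mathbf 1$ for some scalar $c$. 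Substituting into $\mathrm x^T A \mathrm x = 0$ gives $c^2 \sum_i a_i = 0$. Since $A \neq 0$, at least one $a_i > 0$, so $\sum_i a_i > 0$ and therefore $c = 0$, i.e. $\mathrm x = 0$. Thus $\ker S = \{0\}$ and $S$ is invertible.

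I do not anticipate a serious obstacle here; the only point requiring care is the characterization $\ker \E_\G^T = \mathrm{span}\{\mathbf 1\}$ for connected $\G$, which depends on the sign convention for the incidence matrix fixed in the Notations section (for each edge $k=(i,j)$, $[\E_\G]_{ik}=+1$, $[\E_\G]_{jk}=-1$). With that convention, the columns of $\E_\G$ are the vectors $e_i - e_j$, and $\E_\G^T \mathrm x = 0$ says $\mathrm x_i = \mathrm x_j$ for every edge, which propagates to all vertices precisely because $\G$ is connected. One could alternatively phrase the argument via the graph Laplacian $L = \E_\G \E_\G^T$ and note $\mathrm x^T \E_\G B \E_\G^T \mathrm x \ge \lambda_{\min}(B)\, \mathrm x^T L \mathrm x$, but the direct kernel computation above is cleaner and avoids invoking eigenvalue bounds. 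If one wishes to be fully explicit one can also note that $\mathrm x \mapsto \mathrm x^T A \mathrm x + \mathrm x^T \E_\G B \E_\G^T \mathrm x$ is a strictly convex function on $\R^n$ under the stated hypotheses, which is an equivalent way of stating positive definiteness of $S$.
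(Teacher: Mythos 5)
Your proof is correct and follows essentially the same route as the paper's: both establish $S\ge 0$, identify the kernel of $\E_\G B\E_\G^T$ with $\mathrm{span}\{\mathds{1}\}$ via connectedness, and then use $\mathds{1}^TA\mathds{1}=\sum_i a_i>0$ to exclude a nontrivial kernel. You simply spell out the quadratic-form splitting that the paper leaves implicit.
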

\begin{proof}
Note that $\E_\G B \E_\G^T \ge 0$ and that $A\ge 0$, implying that $S \ge 0$. Furthermore, the kernel of $\E_\G B \E_\G^T$ consists solely of the span of the all-ones vector, $\mathds{1}$.  It follows that $\mathds{1}^TA\mathds{1} = \sum_{i=1}^{|\V|} a_i >0$, completing the proof. 
\end{proof}
Proposition \ref{prop.LTIinvert} allows an explicit form for \eqref{eq.SteadyState} for the case $A\ne 0$ by inverting the matrix in question,
$$\mathrm{y}_\G = -(A+\E_\G B\E_\G^T)^{-1}\mathrm w = -X_{\G,A\neq 0} \mathrm w.$$ 
If $A=0$, however, we note that for any $\G\in\mathfrak{G}$, the linear operator $\E_\G B\E_\G^T$ preserves $\IM(\E_\G) = \mathds{1}^\perp$, and moreover, it is invertible when restricted to it. Thus, we denote the restriction of $\E_\G B\E_\G^T$ on $\mathds{1}^\perp$ by $Y_\G$, and obtain 
$$\mathrm{y}_\G  = -Y_\G \Proj_{{\mathds{1}^\perp}} \mathrm w = -X_{\G,A=0} \mathrm w.$$ 

These linear relations between the steady-state output $\mathrm y_\G$ and the constant exogenous input $\mathrm w$ allows for an easier statement of the definition of indication vectors. 
\begin{prop}\label{cor.LTI_Indication}
For LTI agents and controllers, and for a vector $\mathrm w \in \R^n$, the following statements hold:
\begin{itemize}
\item[i)] If $A \ne 0$, $\mathrm w$ is a $\mathfrak{G}$-indication vector if and only if for any two different graphs $\G,\mathcal{H}\in\mathfrak{G}$, we have $X_{\G,A\neq 0}\mathrm{w} \neq X_{\mathcal{H},A\neq 0} \mathrm{w}$.
\item[ii)] If $A = 0$, $\mathrm w$ is a $\mathfrak{G}$-indication vector if and only if any two different graphs $\G,\mathcal{H}\in\mathfrak{G}$, we have $X_{\G,A=0}\mathrm{w} \neq X_{\mathcal{H},A=0} \mathrm{w}$.
\end{itemize}
\end{prop}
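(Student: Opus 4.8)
The plan is to prove this by directly unwinding Definition~\ref{def.IndicationVectors} using the explicit linear expressions for the steady-state output $\mathrm y_\G$ derived just above the statement. Recall that, by definition, $\mathrm w$ is a $\mathfrak G$-indication vector precisely when $\mathrm y_\G\neq \mathrm y_{\mathcal H}$ holds for every pair of distinct graphs $\G,\mathcal H\in\mathfrak G$, where $\mathrm y_\G$ denotes the steady-state output of $(\G,\Sigma,g)$ to which the network converges.

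The first step is to make sure that, in the LTI setting, the steady-state output is a well-defined linear image of $\mathrm w$, so that the phrase ``the steady-state output'' in Definition~\ref{def.IndicationVectors} is unambiguous. By Proposition~\ref{prop.Indication}, $\mathrm y_\G$ is a steady-state output of $(\G,\Sigma,g)$ if and only if $k^{-1}(\mathrm y_\G)+\E_\G g(\E_\G^T\mathrm y_\G)=-\mathrm w$, which in the LTI case is the linear equation $(A+\E_\G B\E_\G^T)\mathrm y_\G=-\mathrm w$. When $A\neq 0$, Proposition~\ref{prop.LTIinvert} (applied to each graph of $\mathfrak G$, which, as in that proposition, we take to be connected) gives that $A+\E_\G B\E_\G^T$ is invertible, so $\mathrm y_\G=-X_{\G,A\neq 0}\mathrm w$ is the unique steady-state output. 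When $A=0$ the equation reads $\E_\G B\E_\G^T\mathrm y_\G=-\mathrm w$; restricting to $\IM(\E_\G)=\mathds{1}^\perp$, on which $\E_\G B\E_\G^T$ acts invertibly, identifies the steady-state output with $-X_{\G,A=0}\mathrm w=-Y_\G\Proj_{\mathds{1}^\perp}\mathrm w$, the component along $\mathds{1}$ being immaterial here.

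With this in place the equivalence should be immediate. Abbreviate $X_\G:=X_{\G,A\neq 0}$ in case i) and $X_\G:=X_{\G,A=0}$ in case ii); then for distinct $\G,\mathcal H\in\mathfrak G$ one has $\mathrm y_\G\neq\mathrm y_{\mathcal H}$ if and only if $-X_\G\mathrm w\neq -X_{\mathcal H}\mathrm w$, i.e.\ if and only if $X_\G\mathrm w\neq X_{\mathcal H}\mathrm w$. Requiring this for all pairs of distinct graphs in $\mathfrak G$ is exactly the assertion that $\mathrm w$ is a $\mathfrak G$-indication vector, yielding statements i) and ii). I do not expect a genuine obstacle here, since the proposition is essentially a bookkeeping restatement of Definition~\ref{def.IndicationVectors}; the only point deserving care is the $A=0$ case, where $\mathrm w$ need not lie in $\mathds{1}^\perp$ and the steady-state output is determined only modulo the all-ones direction, so one must be sure that $\mathrm y_\G$ and $\mathrm y_{\mathcal H}$ are compared on the same subspace $\mathds{1}^\perp$ --- which is precisely what the built-in projection $\Proj_{\mathds{1}^\perp}$ in $X_{\G,A=0}$ arranges.
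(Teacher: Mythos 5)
Your argument is correct and matches the paper's (implicit) reasoning: the paper states this proposition without proof, treating it as an immediate unwinding of Definition \ref{def.IndicationVectors} via the linear relations $\mathrm y_\G=-X_\G\mathrm w$ derived from Propositions \ref{prop.Indication} and \ref{prop.LTIinvert}, which is exactly what you do. Your extra care about the $A=0$ case (comparing outputs on $\mathds{1}^\perp$ via the built-in projection) is a sensible clarification of a point the paper glosses over, not a departure from its approach.
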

We will use $X_\G$ for notational simplicity. 
We first note the following interesting property of $X_\G$.
\begin{prop}\label{rem.uniquenessofX}
If $\G \neq \mathcal{H}$ then $X_{\G} \neq X_{\mathcal{H}}$. 
\end{prop}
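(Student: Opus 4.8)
The plan is to establish the stronger statement that $X_{\G}$ determines $\G$ uniquely, so that $\G \mapsto X_{\G}$ is injective; the claim then follows by contraposition. The guiding observation is that $X_{\G}$ encodes the weighted graph Laplacian $L_{\G} := \E_\G B \E_\G^T$, and that $L_{\G}$ in turn recovers the edge set of $\G$. Here it is essential that $B = \mathrm{diag}(\cdots,b_{ij},\cdots)$ with $b_{ij} > 0$ is fixed in advance for \emph{all} pairs $\{i,j\}$ (part of the standing setup behind Definition \ref{def.IndicationVectors}), so that the only graph-dependent ingredient is which columns of the incidence matrix appear.

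\emph{Step 1: from $X_{\G}$ to $L_{\G}$.} Suppose $X_{\G} = X_{\mathcal{H}}$. If $A \neq 0$, then by Proposition \ref{prop.LTIinvert} both $A + L_{\G}$ and $A + L_{\mathcal{H}}$ are invertible with $X_{\G} = (A + L_{\G})^{-1}$, so $X_{\G} = X_{\mathcal{H}}$ gives $A + L_{\G} = A + L_{\mathcal{H}}$, i.e.\ $L_{\G} = L_{\mathcal{H}}$. If $A = 0$, recall $X_{\G} = Y_{\G}\,\Proj_{\mathds{1}^\perp}$, where $Y_{\G}$ is the inverse of the restriction of $L_{\G}$ to $\mathds{1}^\perp$ (this restriction is invertible because, for a connected graph, $\ker L_{\G} = \mathrm{span}(\mathds{1})$ and $\IM(L_{\G}) = \IM(\E_\G) = \mathds{1}^\perp$). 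Restricting the operator identity $X_{\G} = X_{\mathcal{H}}$ to $\mathds{1}^\perp$ gives $Y_{\G} = Y_{\mathcal{H}}$ there, hence $L_{\G}|_{\mathds{1}^\perp} = L_{\mathcal{H}}|_{\mathds{1}^\perp}$; since both Laplacians annihilate $\mathds{1}$ and respect the splitting $\R^n = \mathds{1}^\perp \oplus \mathrm{span}(\mathds{1})$, we again get $L_{\G} = L_{\mathcal{H}}$. Thus in either case $\E_\G B \E_\G^T = \E_{\mathcal{H}} B \E_{\mathcal{H}}^T$.

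\emph{Step 2: from $L_{\G}$ to $\G$.} Writing $L_{\G} = \sum_{\{i,j\} \in \mathbb{E}(\G)} b_{ij}(e_i - e_j)(e_i - e_j)^T$ with $e_i$ the standard basis vectors, the off-diagonal $(i,j)$-entry of $L_{\G}$ equals $-b_{ij}$ when $\{i,j\} \in \mathbb{E}(\G)$ and $0$ otherwise. As $b_{ij} > 0$, the off-diagonal support of $L_{\G}$ is exactly $\mathbb{E}(\G)$. Hence $L_{\G} = L_{\mathcal{H}}$ forces $\mathbb{E}(\G) = \mathbb{E}(\mathcal{H})$, and since both graphs have the same $n$ vertices, $\G = \mathcal{H}$. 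Contrapositively, $\G \neq \mathcal{H}$ implies $X_{\G} \neq X_{\mathcal{H}}$.

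The only place demanding any care is the $A = 0$ case: one must confirm that the restriction of $X_{\G}$ to $\mathds{1}^\perp$ indeed recovers $L_{\G}|_{\mathds{1}^\perp}$, and that a weighted Laplacian is pinned down by this restriction together with $L_{\G}\mathds{1} = 0$. This, and the implicit use of connectedness of the graphs in $\mathfrak{G}$ (already needed for Proposition \ref{prop.LTIinvert} and for the invertibility of $L_{\G}|_{\mathds{1}^\perp}$), are the only subtleties; everything else is a direct incidence-matrix computation, so I do not anticipate a substantive obstacle.
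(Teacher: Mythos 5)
Your proposal is correct and follows essentially the same route as the paper: invert $X_{\G}$ (on all of $\R^n$ when $A\neq 0$, on $\mathds{1}^\perp$ when $A=0$) to recover the weighted Laplacian $\E_\G B\E_\G^T$, then read off the edge set from its off-diagonal support using $B>0$. You simply make explicit two points the paper leaves implicit — that the off-diagonal entries pin down $\mathbb{E}(\G)$, and that a Laplacian is determined by its restriction to $\mathds{1}^\perp$ together with $L_\G\mathds{1}=0$ — which is a welcome elaboration rather than a different argument.
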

\begin{proof}
Suppose first that $A\neq 0$. We can reconstruct the weighted graph Laplacian $\E_\G B \E_\G^T$ from $X_\G$ using the relation $\E_\G B\E_\G^T = - A + X_\G^{-1}$, thus $X_{\G} = X_{\mathcal{H}}$ implies $\G = \mathcal{H}$, as $B > 0$. If $A=0$, we note that $Y_\G = -X_\G^{-1}$ on the set $\mathds{1}^\perp$. This determines the graph Laplacian, as it is the projection of the weighted graph Laplacian on $\mathds{1}^\perp = \ker(\E_\G B\E_\G^T)^\perp$. 
\end{proof}

After restating the definition of indication vectors for LTI systems, we return to the case of general agents and controllers satisfying Assumptions \ref{Assumption1} and \ref{Assumption2}. Given an indication vector $\mathrm w$, we can quantify how much it can differentiate between different graphs. We do so with the following definition.
\begin{defn} \label{def.epsilon}
The \emph{separation index} of $\mathrm{w}$, denoted $\e=\e(\mathrm{w})$, is defined as the minimal distance between $\mathrm{y}_{\G}$ and $\mathrm{y}_{\mathcal{H}}$ where $\G\ne \mathcal{H}$, i.e., $\e = \min_{\mathcal{G}\ne \mathcal{H}} \|\mathrm{y}_{\G}-\mathrm{y}_{\mathcal{H}}\|$, where the minimization is over all graphs in $\mathfrak{G}$.
\end{defn}

\begin{rem} \label{rem.epsilon}
The separation index $\e$ acts as a bound on the error we can tolerate when computing the steady-state output of the closed-loop system. This error can be comprised of both numerical errors, as well as errors arising from early termination of the system (i.e., before reaching steady-state). Indeed, suppose we want to differentiate between $\G,\mathcal{H}$. We know that $\|\mathrm{y}_{\G} - \mathrm{y}_{\mathcal{H}}\| \ge \e$. Suppose we have the terminal output $\mathrm{y}$ of the closed-loop system for either $\G$ or $\mathcal{H}$.  By the triangle inequality, if $\|\mathrm{y-y}_{\G}\| < 0.5\e$ then $\|\mathrm{y-y}_{\mathcal{H}}\| \ge 0.5\e$ and vice versa. Thus, if we know that the sum of the numerical and early termination errors is less than $0.5\e$, we can correctly choose the underlying graph by choosing which of $\mathrm y_{\G}$ and $\mathrm y_{\mathcal{H}}$ is closer to $\mathrm y$.
\end{rem}

\begin{rem} \label{rem.epsilonForLinearSystems}
Consider the case of LTI agents and controllers. In that case, Proposition \ref{cor.LTI_Indication} implies that the relation between $\mathrm{y}_\G$ and $\mathrm w$ is linear. Thus, for any constant $\beta>0$, the separation index satisfies $\e(\beta\mathrm w) = \beta\e(\mathrm w)$.
\end{rem}

\subsection{Methods of Choice for Indication Vectors}
We now explore how to construct indication vectors for a multi-agent system of the form \eqref{eq.System} satisfying Assumptions \ref{Assumption1} and \ref{Assumption2}.  In this sub-section, we present several methods for doing so.
\subsubsection*{Randomization}
Our first approach is to construct the indication vectors via randomization. We claim that random vectors $\mathrm w\in\mathbb{R}^{n}$ are indication vectors with probability $1$.
\begin{thm} \label{Thm.RandMethod}
Let $\PP$ be any continuous probability distribution on $\R^{n}$, and let $\mathfrak{G}$ be any collection of graphs over $n$ nodes. Then $\PP(\mathrm w \text{ is a $\mathfrak{G}$-indication vector}) = 1$.
\end{thm}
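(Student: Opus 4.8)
The plan is to show that the set of vectors $\mathrm w\in\R^n$ that \emph{fail} to be $\mathfrak{G}$-indication vectors has Lebesgue measure zero; since $\PP$ is a continuous distribution (absolutely continuous, or at least atomless on affine subspaces — more carefully, assigning zero mass to any proper affine subspace), this immediately gives $\PP(\mathrm w \text{ is a $\mathfrak{G}$-indication vector})=1$. First I would fix an arbitrary pair of distinct graphs $\G,\mathcal H\in\mathfrak G$ and define the ``bad set'' $B_{\G,\mathcal H}=\{\mathrm w:\ \mathrm y_\G=\mathrm y_{\mathcal H}\}$, where $\mathrm y_\G,\mathrm y_{\mathcal H}$ are the (unique, by Proposition~\ref{prop.Indication} together with the strict monotonicity from Assumptions~\ref{Assumption1}--\ref{Assumption2}) steady-state outputs. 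Because $\mathfrak G$ is a finite collection of graphs on $n$ labelled vertices, there are only finitely many such pairs, so it suffices to show each $B_{\G,\mathcal H}$ is null, and then take the union.

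The heart of the argument is to show $B_{\G,\mathcal H}$ lies inside a proper subvariety (or a finite union of proper affine subspaces, in the LTI case). In the LTI case this is clean: by Proposition~\ref{cor.LTI_Indication}, $\mathrm y_\G=-X_\G\mathrm w$ depends linearly on $\mathrm w$, so $B_{\G,\mathcal H}=\ker(X_\G-X_{\mathcal H})$, which by Proposition~\ref{rem.uniquenessofX} is a \emph{proper} subspace of $\R^n$ (since $X_\G\neq X_{\mathcal H}$), hence Lebesgue-null. For the general nonlinear case I would instead argue as follows. Suppose, for contradiction, that $B_{\G,\mathcal H}$ has positive measure. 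Consider the map $\mathrm w\mapsto\mathrm y_\G(\mathrm w)$; by Assumption~\ref{Assumption2} the left-hand side of \eqref{eq.SteadyState} is a smooth map $F_\G(\mathrm y)=k^{-1}(\mathrm y)+\E_\G g(\E_\G^T\mathrm y)$ with everywhere-positive-definite Jacobian (the Jacobian is $\mathrm{diag}((k_i^{-1})')+\E_\G\,\mathrm{diag}(g_{ij}')\,\E_\G^T$, a sum of a positive-semidefinite diagonal and a positive-semidefinite weighted Laplacian, and strict positive-definiteness follows because $(k_i^{-1})'\ge0$ with the Laplacian term killing only the $\mathds 1$ direction — one must treat the $k^{-1}\equiv$const / integrator case exactly as in Proposition~\ref{prop.LTIinvert}). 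Hence $F_\G$ is a smooth diffeomorphism onto its image and $\mathrm y_\G(\mathrm w)=F_\G^{-1}(-\mathrm w)$ is smooth in $\mathrm w$. Then $\Phi(\mathrm w):=\mathrm y_\G(\mathrm w)-\mathrm y_{\mathcal H}(\mathrm w)$ is smooth, and $B_{\G,\mathcal H}=\Phi^{-1}(0)$. I would show $\Phi$ is not identically zero: if it were, then $\mathrm y_\G\equiv\mathrm y_{\mathcal H}$ for every $\mathrm w$, i.e. $F_\G^{-1}(-\mathrm w)=F_{\mathcal H}^{-1}(-\mathrm w)$ for all $\mathrm w$, equivalently $F_\G=F_{\mathcal H}$ as maps; but $F_\G-F_{\mathcal H}=(\E_\G g(\E_\G^T\cdot)-\E_{\mathcal H} g(\E_{\mathcal H}^T\cdot))$, and evaluating at, say, $\mathrm y=\mathds 1+t e_i$ and differentiating at $t=0$ recovers the $i$-th row of the weighted Laplacian $\E_\G\,\mathrm{diag}(g_{ij}'(0))\,\E_\G^T$, whose off-diagonal sign pattern (nonzero exactly on edges, since $g_{ij}'>0$) determines the edge set — so $F_\G=F_{\mathcal H}$ forces $\G=\mathcal H$, a contradiction. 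A nonzero real-analytic function has null zero set; if $g,k^{-1}$ are only smooth and not analytic one instead invokes that $\Phi$ is nonzero and $C^1$ with, generically, nonvanishing differential, or argues directly that the zero set of the nonconstant $\Phi$ cannot contain an open set hence (being also closed under the diffeomorphism structure) must be null — this is the one spot that needs care.

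The main obstacle I expect is precisely this last point: turning ``$\Phi\not\equiv0$'' into ``$\{\Phi=0\}$ is Lebesgue-null'' without an analyticity hypothesis. The cleanest fix, and the one I would adopt, is to prove the \emph{stronger} structural fact that $B_{\G,\mathcal H}$ is contained in a \emph{proper affine subspace} even in the nonlinear case: rewrite \eqref{eq.SteadyState} as $-\mathrm w=F_\G(\mathrm y_\G)$ and $-\mathrm w=F_{\mathcal H}(\mathrm y_{\mathcal H})$; if $\mathrm y_\G=\mathrm y_{\mathcal H}=:\mathrm y$ then $\E_\G g(\E_\G^T\mathrm y)=\E_{\mathcal H} g(\E_{\mathcal H}^T\mathrm y)$, an equation on $\mathrm y$ alone (independent of $\mathrm w$!), whose solution set $Z$ is a proper subvariety of $\R^n$ by the edge-recovery argument above; then $B_{\G,\mathcal H}=-F_\G(Z)$ is the image of a measure-zero set under the diffeomorphism $F_\G$, hence measure zero. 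Finally, $\mathfrak G$ being finite, $\bigcup_{\G\neq\mathcal H}B_{\G,\mathcal H}$ is a finite union of null sets, so $\PP$ of its complement is $1$, which is the claim. I would present the LTI specialization as an immediate corollary of Propositions~\ref{cor.LTI_Indication} and~\ref{rem.uniquenessofX}.
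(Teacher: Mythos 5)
Your strategy is the same as the paper's: subtract the two steady-state equations to obtain the $\mathrm w$-free condition $\E_\G g(\E_\G^T\mathrm y)=\E_{\mathcal H} g(\E_{\mathcal H}^T\mathrm y)$, show its solution set $Z$ is Lebesgue-null, push $Z$ forward through the smooth map $\mathrm y\mapsto -F_\G(\mathrm y)=\mathrm w$ (which, being locally Lipschitz, preserves null sets), and finish with a finite union over pairs of graphs and the continuity of $\PP$. All of that matches the paper, and your LTI specialization via $\ker(X_\G-X_{\mathcal H})$ together with Proposition \ref{rem.uniquenessofX} is fine.

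The one step that is not closed --- and you correctly flagged it as the spot needing care --- is the claim that $Z$ is null. Calling $Z$ a ``proper subvariety'' is not available here: $g$ is only assumed smooth, not polynomial or real-analytic, so the zero set of a map that is merely not identically zero can have positive measure (think of a bump function). What rescues the argument, and what the paper actually uses, is a pointwise statement rather than a global one: the differential of $\mathrm y\mapsto \E_\G g(\E_\G^T\mathrm y)-\E_{\mathcal H}g(\E_{\mathcal H}^T\mathrm y)$ at \emph{every} $\mathrm y$ equals $\E_\G\,\mathrm{diag}\bigl(g_{ij}'(\E_\G^T\mathrm y)\bigr)\E_\G^T-\E_{\mathcal H}\,\mathrm{diag}\bigl(g_{ij}'(\E_{\mathcal H}^T\mathrm y)\bigr)\E_{\mathcal H}^T$, a difference of weighted Laplacians of two distinct graphs with strictly positive weights (Assumption \ref{Assumption2}), hence never the zero matrix: any edge in the symmetric difference of $\G$ and $\mathcal H$ contributes a nonzero off-diagonal entry. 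Once the differential is nowhere zero on $Z$, the implicit function theorem expresses $Z$ locally as the graph of a smooth function of $n-1$ coordinates, and a Fubini-plus-countable-cover argument (the paper's Lemma \ref{lem.Implicit}) gives that $Z$ has measure zero. You already compute exactly this differential in your ``edge-recovery'' step, but you use it only at $\mathrm y=\mathds 1$ and only to conclude $F_\G\neq F_{\mathcal H}$; the fix is to apply it at every point of $Z$ and then invoke the implicit function theorem, rather than relying on the weaker fact that the difference map is nonzero somewhere.
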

\begin{proof}
Recall that $\mathrm w$ is not an indication vector if and only if there are two graphs $\G_1,\G_2$ and a vector $\mathrm{\mathrm y}\in\mathbb{R}^{|\V|}$ such that
\begin{align*}
k^{-1}(\mathrm{y})+\E_{\G_i}g(\E_{\G_i}^T\mathrm{y}) = -\mathrm w,\; i=1,2.
\end{align*} 
Subtracting each equation from the other gives \begin{align} \label{eq.RandomForbiddenSurface}
\E_{\G_1}g(\E_{\G_1}^T\mathrm y) - \E_{\G_2}g(\E_{\G_2}^T\mathrm y) = 0.
\end{align}
For each $\G_1,\G_2$, the collection of solutions to \eqref{eq.RandomForbiddenSurface} forms a set, and note that $\mathrm w$ is an indication vector if and only if the solutions $\mathrm{y}$ are not in any of these sets. Define 
$$ F(y) = \E_{\G_1}g(\E_{\G_1}^T\mathrm y) - \E_{\G_2}g(\E_{\G_2}^T\mathrm y), $$
so that $F:\R^n\to\R^n$ is a smooth function. Its differential is given by
$$ \nabla F(y) = \E_{\G_1}\nabla g(\E_{\G_1}^T\mathrm y)\E_{\G_1}^T - \E_{\G_2}\nabla g(\E_{\G_2}^T\mathrm y)\E_{\G_2}^T, $$
where $\nabla g= \mathrm{diag}(\frac{dg_{ij}}{d\zeta_{ij}})$ is the derivative of $g$. Because $\frac{dg_{ij}}{d\zeta_{ij}} > 0$ by Assumption \ref{Assumption2}, $\nabla F(y)$ is the difference of two weighted graph Laplacians, with underlying graphs $\G_1,\G_2$ and positive weights. Thus $\nabla F$ never vanishes, and Lemma \ref{lem.Implicit} implies that the solutions of \eqref{eq.RandomForbiddenSurface} form a zero measure set. Thus $\mathrm w$ is an indication vector if and only if the solutions $\mathrm{y}$ are not in the finite union of the zero measure sets defined by \eqref{eq.RandomForbiddenSurface}, i.e., a zero-measure set.

The mapping between $-\mathrm w$ and $\mathrm{y}$,  $-\mathrm w = k^{-1}(\mathrm{y}) + \E_\G g(\E_\G^T\mathrm{y})=G(\mathrm{y})$, is smooth and strictly monotone, meaning that it is the gradient of a strictly convex and smooth function. Thus, the inverse function $\mathrm{y}=G^{-1}(\mathrm w)$ is a smooth and strictly convex function, as the gradient of the dual function, which is also strictly convex and smooth \cite{Rockafellar1998}. This implies that $G^{-1}$ is absolutely continuous \cite{AbsoluteContinuity}, sending the zero measure sets to zero measure sets. In turn, the set that $\mathrm y$ has to avoid (for $\mathrm w$ to be an indication vector) is zero-measure, meaning that the corresponding set that $\mathrm w$ has to avoid is also zero measure. But $\PP$ is a continuous probability measure, and thus the probability of the zero-measure set that $\mathrm{w}$ has to avoid is zero. This completes the proof.
\end{proof}

This method works under the Assumptions \ref{Assumption1} and \ref{Assumption2}, but can produce stronger results when considering LTI agents and controllers.  In particular, we can estimate the separation index of a randomly chosen vector.
\begin{cor} \label{thm.RandEpsilon}
Suppose the agents and controllers are LTI. Furthermore, suppose that $\mathrm w$ is sampled according to the standard Gaussian probability measure $\PP$ on $\R^n$. Define $\beta=\min\{a_1,...,a_n\}$ if $A\ne 0$, and $\beta = \min\{b_{ij}\}/{n \choose 2}$ otherwise.
 Then for any $\delta>0$, the separation index $\e=\e(\mathrm{w})$ satisfies $\delta \le \e$ with probability $\ge 1-2^{n^2}(2\Phi(\delta/2\beta)-1)$, where $\Phi$ is the cumulative distribution function of a standard Gaussian random variable.
\end{cor}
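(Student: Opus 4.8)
The plan is to estimate the probability that the separation index $\e(\mathrm w)$ falls below a threshold $\delta$ by bounding, for a single fixed pair of distinct graphs $\G,\mathcal{H}\in\mathfrak G$, the probability that $\|\mathrm y_\G - \mathrm y_\mathcal{H}\| < \delta$, and then applying a union bound over all pairs. Since the number of graphs on $n$ labeled vertices is at most $2^{\binom n2} \le 2^{n^2}$, the number of unordered pairs is at most $\tfrac12 2^{n^2}(2^{n^2}-1) \le 2^{n^2}$ (a crude but sufficient bound matching the statement's exponent; I would double-check the exact constant and perhaps absorb it into the count), so it suffices to show that each bad pair event has probability at most $2\Phi(\delta/2\beta)-1$.

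First I would fix $\G\ne\mathcal H$ and use Proposition \ref{rem.uniquenessofX}, which guarantees $X_\G \ne X_\mathcal{H}$, i.e. the matrix $M := X_\G - X_\mathcal{H}$ is nonzero. By Proposition \ref{cor.LTI_Indication} we have $\mathrm y_\G - \mathrm y_\mathcal{H} = -(X_\G - X_\mathcal{H})\mathrm w = -M\mathrm w$, so the bad event is $\{\|M\mathrm w\| < \delta\}$. Since $M\ne 0$, pick any row $m^T$ of $M$ that is nonzero; then $\|M\mathrm w\| \ge |m^T\mathrm w|$, so $\{\|M\mathrm w\|<\delta\}\subseteq\{|m^T\mathrm w|<\delta\}$. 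The random variable $m^T\mathrm w$ is Gaussian with mean $0$ and variance $\|m\|^2$, so $\PP(|m^T\mathrm w|<\delta) = 2\Phi(\delta/\|m\|)-1$, and this is increasing in $\|m\|^{-1}$... wait, decreasing in $\|m\|$; so I need a \emph{lower} bound on $\|m\|$ to get an \emph{upper} bound on this probability. This is the step I expect to be the main obstacle: I must show that every nonzero row of $X_\G-X_\mathcal H$ — or at least some suitably chosen row — has norm bounded below by $2\beta$, with $\beta$ as defined in the statement.

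To establish that lower bound I would argue on the matrix $M$ directly rather than a single row, using $\|M\mathrm w\|\ge \sigma_{\min,+}(M)\cdot\|\Proj_{(\ker M)^\perp}\mathrm w\|$ or more simply picking a unit vector $v$ with $Mv\ne 0$ and $\|Mv\|\ge$ (smallest nonzero singular value of $M$). In the case $A\ne 0$, the identity $\E_\G B\E_\G^T = -A + X_\G^{-1}$ from the proof of Proposition \ref{rem.uniquenessofX} gives $X_\G^{-1} - X_\mathcal{H}^{-1} = \E_\G B\E_\G^T - \E_\mathcal{H} B\E_\mathcal{H}^T =: L$, a difference of weighted Laplacians that has an integer-structured nonzero entry scaled by some $b_{ij}$, hence $\|L\|$ (operator norm, or a suitable entrywise quantity) is bounded below in terms of $\min_k b_k$; combined with $X_\G = S_\G^{-1}$ where $S_\G = A + \E_\G B\E_\G^T \succeq \beta I$ (as in Proposition \ref{prop.LTIinvert}, since $A\ge\beta I$ forces $S_\G\ge\beta I$), one gets $\|X_\G - X_\mathcal H\| = \|X_\G L X_\mathcal H\|$ and a lower bound via $\|X_\G\|,\|X_\mathcal H\|\le 1/\beta$... no — that gives an upper bound; for the lower bound I instead use $\|L\| = \|X_\G^{-1}(X_\mathcal H - X_\G)X_\mathcal H^{-1}\| \le \|X_\G^{-1}\|\,\|X_\mathcal H - X_\G\|\,\|X_\mathcal H^{-1}\|$, and since $\|X_\G^{-1}\| = \|S_\G\|$ can be large this needs care — the cleaner route is to bound $\|X_\G - X_\mathcal H\|$ below directly by testing against an eigenvector. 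In the case $A=0$, $X_\G = -Y_\G^{-1}\Proj_{\mathds 1^\perp}$ where $Y_\G$ is the Laplacian restricted to $\mathds1^\perp$; its eigenvalues lie between $0$ and the maximal Laplacian eigenvalue, which is at most $2(n-1)\max_k b_k$, and when $X_\G\ne X_\mathcal H$ the restricted inverse Laplacians differ, yielding a lower bound on $\|X_\G - X_\mathcal H\|$ of order $\min_k b_k / \binom n2$, which is exactly the stated $\beta$. Once the lower bound $\|Mv\|\ge 2\beta$ for some unit $v$ is in hand, $\PP(\|M\mathrm w\|<\delta)\le\PP(|v^TM^T\mathrm w| ... )$ — more precisely choosing $v$ to be the left singular vector, $\|M\mathrm w\|\ge \sigma(M)|u^T\mathrm w|$ for the corresponding right singular vector $u$, $u^T\mathrm w\sim\mathcal N(0,1)$, so $\PP(\|M\mathrm w\|<\delta)\le\PP(|u^T\mathrm w|<\delta/2\beta) = 2\Phi(\delta/2\beta)-1$. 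Summing over the at most $2^{n^2}$ pairs and passing to the complement gives $\PP(\e\ge\delta)\ge 1 - 2^{n^2}(2\Phi(\delta/2\beta)-1)$, as claimed. I would double-check the factor of $2$ in $2\beta$ (it likely comes from the pairwise difference structure and from using $\tfrac12\binom n2$-type counting, or is simply slack built in for safety) and make sure the operator-norm versus singular-value bookkeeping is consistent throughout.
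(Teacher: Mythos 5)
Your overall strategy --- a union bound over the at most $2^{n^2}$ pairs of graphs, followed by reducing the event $\{\|(X_\G-X_{\HH})\mathrm w\|<\delta\}$ to a one-dimensional Gaussian event through the singular value decomposition --- is the same as the paper's. The genuine gap is precisely the step you yourself flag as ``the main obstacle'': to get the per-pair bound $\PP(\|M\mathrm w\|<\delta)\le 2\Phi(\delta/2\beta)-1$ with $M=X_\G-X_{\HH}$ by your route, you need a \emph{lower} bound $\bar\sigma(M)\ge 2\beta$ (or a row of $M$ of norm at least $2\beta$), and you never establish it: the $A\ne 0$ attempt ends with ``this needs care,'' and the $A=0$ attempt asserts without proof that the difference of restricted inverse Laplacians is bounded below ``of order $\min_k b_k/\binom{n}{2}$.'' Worse, the bound you need is false with $\beta$ as defined in the statement. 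Take $A=aI$ with $a$ large and $B$ fixed: then $M=X_\G\left(\E_{\HH}B\E_{\HH}^T-\E_\G B\E_\G^T\right)X_{\HH}$ with $\bar\sigma(X_\G),\bar\sigma(X_{\HH})\le 1/a$, so $\bar\sigma(M)=O(1/a^2)\to 0$, while $2\beta=2a\to\infty$. Hence no choice of row or singular vector can satisfy the inequality you need, and this step cannot be repaired as sketched.

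For comparison, the paper argues in the opposite direction at exactly this point: its lemma gives the \emph{upper} bound $\bar\sigma(X_\G-X_{\HH})\le C_{\G,A}+C_{\HH,A}\le 2/\beta$, obtained from the triangle inequality together with $\bar\sigma(C^{-1})=1/\underline{\sigma}(C)$ and the spectral estimates $\underline{\sigma}(A+\E_\G B\E_\G^T)\ge\min_i a_i$ (respectively $\underline{\sigma}(Y_\G)\ge\min b_{ij}\,\lambda_2(\G)\ge\min b_{ij}\binom{n}{2}^{-1}$), and then substitutes this into $2-2\Phi(\delta/\bar\sigma)$. Your observation that the monotonicity of $\Phi$ actually calls for a lower bound on $\bar\sigma(X_\G-X_{\HH})$ at this substitution is correct, and it identifies the delicate point of the whole argument; but your proposal does not resolve it, and with the stated $\beta$ it cannot be resolved along the lines you propose. (Note also that an upper bound $\bar\sigma\le 2/\beta$ naturally yields the argument $\delta\beta/2$ inside $\Phi$, not $\delta/(2\beta)$, so the two readings of the constant are not interchangeable and the bookkeeping you promise to ``double-check'' does matter.)
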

The proof is available in the appendix.

\begin{rem}
Corollary \ref{thm.RandEpsilon} and Remark \ref{rem.epsilonForLinearSystems} give a viable method for assuring that the distance between different terminal states of the system (corresponding to different base graphs) is as large as desired. First, choose a desired degree of security $p$, which is the probability of the choice to be successful (say $p=.9999$). Choose $\delta$ so that $p \le 1-2^{n^2}(2\Phi(\delta/2\beta)-1)$. Now choose $\mathrm w$ randomly according to a standard Gaussian distribution, and multiply it by $1/\delta$.
\end{rem}

Let us present another, more constructive approach for designing indication vectors, building upon number theory. 
\subsubsection*{Bases of Computation}
For the rest of this subsection, we continue with LTI agents. We can apply this method if the elements of $A$ are rational. In this case, the elements of $X_\G$ are all rational. The idea is that we can reconstruct the entries of $X_\G$ from $X_\G \mathrm w$ if $\mathrm{w}$ is of the form $\mathrm w=[1,M,M^2,...,M^{n-1}]^T$ for $M$ large enough.
\begin{exam}
Suppose that $C=[a,b,c]$ is a vector with positive integer entries having a numerator no greater than $9$. Take $\mathrm w=[1,10,100]^T$. Then $C\mathrm w=a+10b+100c$ is a three-digit number, and we can reconstruct $C$ by looking at the three digits individually - $a$ is the unit digit, $b$ is the tens digit, and $c$ is the hundreds digit.
\end{exam}
We can generalize this to a more general framework.
\begin{thm} \label{thm.BasesOfComputation}
Suppose that $A,B$ are rational, the denominators of all entries of the matrices $\{X_\G\}_{\G\in\mathfrak{G}}$ divide $D$, and that the numerator (in absolute value) is no larger than $N$. Let $M$ be any integer larger than $(2N+1)D$. Then the vector ${\mathrm w}=[1,M,...,M^{n-1}]^T$ is a $\mathfrak{G}$-indication vector.
\end{thm}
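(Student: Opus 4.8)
The strategy is to reduce the claim, via Proposition \ref{cor.LTI_Indication}, to showing that the map $\G \mapsto X_\G \mathrm w$ is injective on $\mathfrak{G}$ when $\mathrm w = [1, M, \dots, M^{n-1}]^T$. By Proposition \ref{rem.uniquenessofX} we already know $X_\G \ne X_\HH$ for $\G \ne \HH$, so it suffices to show that the linear map $X \mapsto X\mathrm w$ is injective on the relevant (finite) set of matrix differences $\{X_\G - X_\HH : \G, \HH \in \mathfrak{G}\}$. Writing $Z = X_\G - X_\HH$, the entries $Z_{rs}$ are rationals with denominator dividing $D$ and numerator bounded by $2N$ in absolute value (the difference of two entries each with numerator $\le N$ and common denominator $D$, so numerator $\le 2N$). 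Clearing denominators, $D Z_{rs}$ is an integer with $|D Z_{rs}| \le 2N < M/D \cdot D$... more precisely we arrange $|D Z_{rs}| \le 2N$ and $M > (2N+1)D \ge 2N+1$, so $|DZ_{rs}| \le 2N < M$.

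The core of the argument is then a base-$M$ digit uniqueness fact. Fix a row index $r$; the $r$-th entry of $Z\mathrm w$ is $\sum_{s=0}^{n-1} Z_{r,s+1} M^s$, and after multiplying by $D$ we get $\sum_{s=0}^{n-1} (D Z_{r,s+1}) M^s$, an integer combination of powers of $M$ with coefficients $c_s := D Z_{r,s+1}$ satisfying $|c_s| \le 2N < M$. I claim such a sum vanishes only if every $c_s = 0$: if not, take the largest $s^\star$ with $c_{s^\star} \ne 0$; then $|c_{s^\star} M^{s^\star}| \ge M^{s^\star}$ while $|\sum_{s < s^\star} c_s M^s| \le 2N \sum_{s<s^\star} M^s = 2N \frac{M^{s^\star}-1}{M-1} < M^{s^\star}$ because $2N \le M-1$ (this is exactly where $M > (2N+1)D \ge 2N+1$ is used), so the sum cannot be zero — a contradiction. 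Hence $Z\mathrm w = 0$ forces $Z = 0$, i.e. $X_\G \mathrm w \ne X_\HH \mathrm w$ whenever $X_\G \ne X_\HH$, which by Proposition \ref{rem.uniquenessofX} means whenever $\G \ne \HH$.

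The only real subtlety — and the step I would be most careful about — is the bookkeeping on denominators and numerators: I must confirm that the difference of two rationals with denominators dividing $D$ and numerators bounded by $N$ indeed has numerator bounded by $2N$ over the common denominator $D$, and that "numerator bounded by $N$" is interpreted over the fixed denominator $D$ (not in lowest terms), which is consistent with the theorem's hypothesis. One also needs to handle both cases $A \ne 0$ and $A = 0$ uniformly: in the $A = 0$ case $X_\G = X_{\G, A=0}$ is only defined as an operator on $\mathds{1}^\perp$, but the formula $\mathrm y_\G = -X_{\G,A=0}\mathrm w$ still expresses a genuine linear map $\R^n \to \R^n$, so Proposition \ref{cor.LTI_Indication}(ii) applies verbatim and the argument above goes through with $Z$ the difference of these maps (still rational with controlled entries by hypothesis). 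No convergence or measure-theoretic machinery is needed here; it is a finite, elementary number-theoretic argument once the reduction is in place.
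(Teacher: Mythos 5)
Your argument is essentially the same as the paper's: both rest on uniqueness of the base-$M$ representation of an integer with digits bounded by the radix, the paper phrasing it as explicit digit-by-digit reconstruction of a row of $X_\G$ from $(X_\G\mathrm w)_i$, and you phrasing it as the contrapositive statement that $Z\mathrm w=0$ with $Z=X_\G-X_\HH$ forces $Z=0$; combined with Proposition \ref{rem.uniquenessofX} these are interchangeable. One bookkeeping correction: since the hypothesis only says the denominators \emph{divide} $D$, an entry $p/q$ of $X_\G$ with $|p|\le N$ and $q\mid D$ becomes $(D/q)p$ over the common denominator $D$, so $|DZ_{rs}|\le 2ND$ rather than $2N$; the digit argument still closes because $M>(2N+1)D=2ND+D$ gives $2ND\le M-1$, which is exactly why the theorem asks for $M>(2N+1)D$ and not merely $M>2N+1$. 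The paper's constructive version has the added benefit of directly yielding the $O(n)$ per-row recovery used later in Theorem \ref{thm.NetworkDetectionLTI}, which your pure injectivity formulation does not provide on its own.
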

\begin{proof}
Each element in the product $X_\G \mathrm w$ corresponds to a single row of $X_\G$ multiplied with $\mathrm w$, so it's enough to reconstruct a row. We take a single row of $X_\G$ and mark it as $[\frac{p_1}{q_1},\cdots,\frac{p_n}{q_n}]^T$, where $|p_i|\le N$ and $q_i$ divides $D$. We let $\mathrm{R} = (X_\G \mathrm w)_i$. Therefore, 
\[
\mathrm{R}=\begin{bmatrix}\frac{p_1}{q_1}&\cdots & \frac{p_n}{q_n}\end{bmatrix} \mathrm w = \frac{p_1}{q_1} + \frac{p_1}{q_1}M+\cdots+\frac{p_n}{q_n}M^{n-1}.
\]
We can define $m_i = \frac{D}{q_1}$, which is an integer no larger than $D$, and multiply both sides of the equation by $D$ to obtain
\[
D\,\mathrm{R} = m_1p_1 + m_2p_2M+\cdots+m_np_nM^{n-1}.
\]
Note that $m_ip_i$ is an integer lying between $-ND$ and $ND$. We can add $\sum_{i=0}^{n-1} (NDM^i)$ to both sides of the equation, leading to \small
\[
D\mathrm{R} + \sum_{i=0}^{n-1} (NDM^i) = (m_1p_1+ND) +\cdots+(m_np_n+ND) M^{n-1}.
\]\normalsize
The left hand side is known, and the coefficients in the right hand side are integers between $0$ and $2ND$. Thus, writing $D\mathrm{R} + \sum_{i=0}^{n-1} (NDM^i)$ in the system with radix $M$, the numbers $m_ip_i+ND$ can be computed by looking at the individual digits. Deducting $ND$ and dividing by $D$ gives all of the entries $\frac{p_i}{q_i}$, allowing reconstruction.
\end{proof}

\section{Indication Vectors For Network Detection} \label{sec.Applications}
Up to now, we have dealt with indication vectors, which give an easy way of solving Problem \ref{prob.network_differentiation}, i.e., differentiating between closed-loop systems of the form \eqref{eq.System} which differ only on underlying graph level. We claim that we can go a step further and solve Problem \ref{prob.network_detection2}, i.e., reconstructing the underlying graph of a system of the form \eqref{eq.System}.
We now present the network reconstruction scheme based on indication vectors. The key notion that will allow us to take the leap is through the use of appropriate {look-up tables}.

Look-up tables are tables comprising of two columns, one called the key and the other called the value, that act like oracles and are designed to decrease runtime computations \cite{}. The key is usually easy to come by, and the value is usually harder to find. Examples of look-up tables include mathematical tables, like logarithm tables and sine tables. Other examples include phone books and other databases like hospital or police records. 

We now state the main result regarding network detection for general agents and controllers, focusing on the LTI case later. 
\begin{prop} \label{prop.NetworkDetection}
Let $(\G,\Sigma,g)$ be a network system of the form \eqref{eq.System} satisfying Assumptions \ref{Assumption1} and \ref{Assumption2}. Then for any indication vector $\mathrm{w}$, there exists an algoritgm solving Problem \ref{prob.network_detection2} using only a single exogenous output, namely $\mathrm{w}$.
\end{prop}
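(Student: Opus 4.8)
The plan is to build a \emph{look-up table} offline, indexed by candidate graphs, and then answer Problem \ref{prob.network_detection2} with a single online query. Since the collection $\mathfrak{G}$ of all graphs on the $n$ labeled vertices is finite (at most $2^{\binom{n}{2}}$ elements), enumerating it is legitimate, and the indication vector $\mathrm w$ supplied by hypothesis is, in particular, a $\mathfrak{G}$-indication vector for this $\mathfrak{G}$.

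First I would construct the table. For each $\G \in \mathfrak{G}$, the steady-state output $\mathrm y_\G$ of $(\G,\Sigma,g)$ under the constant input $\mathrm w$ is, by Proposition \ref{prop.Indication}, the solution of $k^{-1}(\mathrm y)+\E_\G g(\E_\G^T\mathrm y) = -\mathrm w$. As already noted in the proof of Theorem \ref{Thm.RandMethod}, the left-hand side $G(\mathrm y)$ is the gradient of a smooth convex potential (the sum of the integral functions $K_i^\star$ of $k_i^{-1}$ and of the antiderivatives of the $g_{ij}$ composed with $\E_\G^T$), so computing $\mathrm y_\G$ reduces to an unconstrained convex optimization problem and can be carried out numerically to any desired accuracy. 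I would then store the pairs $(\mathrm y_\G,\G)$ with $\mathrm y_\G$ as the key. Because $\mathrm w$ is a $\mathfrak{G}$-indication vector, Definition \ref{def.IndicationVectors} guarantees $\mathrm y_\G \neq \mathrm y_{\HH}$ whenever $\G \neq \HH$; hence the keys are pairwise distinct and the table defines a genuine map from steady-state outputs to graphs.

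The online step is then immediate: inject the exogenous input $\mathrm w$ into the (unknown) network; by Theorem \ref{thm.ClosedLoopSteadyStates} the outputs converge, and we measure an approximation $\mathrm y$ of the steady state; return the graph whose stored key is closest to $\mathrm y$ in Euclidean norm. If the measurement is exact this returns $\G$ by injectivity of the table, and if the combined numerical and early-termination error is below $\e(\mathrm w)/2$, Remark \ref{rem.epsilon} shows the nearest key is still $\mathrm y_\G$, so the reconstruction is correct.

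The main obstacle is the well-posedness and computability of the table entries, i.e., that \eqref{eq.SteadyState} has an essentially unique, numerically computable solution for every candidate graph; this is exactly where Assumptions \ref{Assumption1}--\ref{Assumption2} are used, through smoothness of $k_i^{-1}$ and strict monotonicity of the $g_{ij}$, which make the associated potential strictly convex on the relevant subspace. A minor care point is the degenerate case $A=0$ of pure integrators, where $\mathrm y_\G$ is only determined modulo $\mathds{1}$ and the key should be taken as $\Proj_{\mathds{1}^\perp}\mathrm y$, consistent with the reduction to $X_{\G,A=0}$ made earlier.
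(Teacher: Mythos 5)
Your proposal is correct and follows essentially the same route as the paper: precompute a look-up table of steady-state outputs $\mathrm y_{\mathcal{H}}$ for all candidate graphs via \eqref{eq.SteadyState}, run the system once with the indication vector $\mathrm w$, and return the graph whose stored output is nearest to the measured steady state, with injectivity of the table guaranteed by Definition \ref{def.IndicationVectors} and robustness to errors below $\e(\mathrm w)/2$ by Remark \ref{rem.epsilon}. The extra details you supply --- computability of the table entries via convex optimization and the projection onto $\mathds{1}^\perp$ in the degenerate case --- are welcome refinements but do not change the argument.
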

\begin{proof}
Let $\mathfrak{G}$ be the collection of all graphs on $n$ vertices. We construct a $\mathfrak{G}$-indication vector $\mathrm w$ using Theorem \ref{Thm.RandMethod}. Before running the system, we build a lookup table with keys being graphs $\mathcal{H}\in\mathfrak{G}$, and values being the outputs $\mathrm y_{\mathcal{H}}$, which can be computed by \eqref{eq.SteadyState}. Now, run the closed-loop system with the input $\mathrm w$.
By definition of a $\mathfrak{G}$-indication vector, we know that the terminal output $\mathrm y$ of closed-loop system completely classifies the underlying graph $\G$, i.e., different underlying graphs give rise to different terminal outputs. We can now reconstruct the graph $\G$ by comparing $\mathrm y$ to the values of the look-up table, finding the graph $\mathcal{H}$ minimizing $\|\mathrm y - \mathrm y_\mathcal{H}\|$. Then because $\mathrm w$ can differentiate the systems $(\G,\Sigma,g)$ and $(\mathcal{H},\Sigma,g)$ if $\G \neq \mathcal{H}$, we must have that $\G=\mathcal{H}$. 
\end{proof}

\begin{rem}
In the proof above, we assumed that the closed-loop system is run until the output converges. However, in practice, both numerical errors and early termination errors give us a skewed value of the true terminal output of the closed-loop system, as was discussed in Remark \ref{rem.epsilon}. In the algorithm presented above, we can tolerate an error of up to $0.5\e(\mathrm w)$ in the value of $\mathrm y$.
\end{rem}

\begin{rem}
We should note that in order to implement the network detection scheme in the proof of Proposition \ref{prop.NetworkDetection}, we need an observer with access to the look-up table, the output of all of the agents, and the input $\mathrm{w}$. This network detection scheme is not distributed in the sense that it requires one observer to know the outputs of all of the agents. The size of the look-up table increases rapidly with the number of nodes if we don't assume anything about the underlying graph. One should note that should recall that the computation can be done offline, and that it can be completely parallelized - we are just comparing the entries of the table to the measured output. Furthermore, if we add additional assumptions on the graph (e.g., the underlying graph is a subgraph of some known graph), the size of the look-up table drops significantly.
\end{rem}

We can prove a stronger result for the case of LTI agents and controllers, namely a distributed implementation strategy and an analysis of the algorithm complexity. 
\begin{thm} \label{thm.NetworkDetectionLTI}
Let $(\G,\Sigma,g)$ be a network system of the form \eqref{eq.System} satisfying Assumptions \ref{Assumption1} and \ref{Assumption2}, consisting of LTI agents and controllers, and that the matrices $A,B$ have rational entries. Then there exists a distributed $O(n^3)$ algorithm solving Problem \ref{prob.network_detection2}. It requires to run the system only once, with a specific constant exogenous input $\mathrm w$.
\end{thm}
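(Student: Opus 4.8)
The plan is to combine the ``bases of computation'' indication vector of Theorem~\ref{thm.BasesOfComputation} with the reconstruction identity $\E_\G B\E_\G^T = -A + X_\G^{-1}$ (from the proof of Proposition~\ref{rem.uniquenessofX}), and then to argue that each piece can be carried out either offline, in a distributed fashion, or in $O(n^3)$ arithmetic operations. First I would fix $\mathfrak{G}$ to be the set of all graphs on $n$ vertices and instantiate the indication vector $\mathrm w = [1, M, M^2, \ldots, M^{n-1}]^T$ with $M$ a sufficiently large integer. Since $A, B$ are rational, the entries of $X_\G$ are rational with a common bounded denominator $D$ and bounded numerator $N$; these bounds depend only on $A$, $B$ and $n$ (not on $\G$), so $M$ can be chosen offline. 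By Theorem~\ref{thm.BasesOfComputation}, $\mathrm w$ is a $\mathfrak{G}$-indication vector, and more importantly its proof is \emph{constructive}: from the measured steady-state output $\mathrm y_\G = X_\G \mathrm w$ (equivalently $-\mathrm y_\G = X_\G \mathrm w$ up to sign, cf.\ Proposition~\ref{cor.LTI_Indication}), reading off the radix-$M$ digits of each coordinate recovers every entry of the corresponding row of $X_\G$. Thus from one run of the closed loop we recover the full matrix $X_\G$, and then $\E_\G B\E_\G^T = -A + X_\G^{-1}$ (or, when $A=0$, the restriction $Y_\G = -X_\G^{-1}$ on $\mathds 1^\perp$) recovers the weighted Laplacian, whose off-diagonal sparsity pattern is exactly $\G$ since $B>0$.

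Next I would address the two quantitative claims, distributedness and the $O(n^3)$ bound. For distributedness: agent $i$ measures only its own steady-state output $(\mathrm y_\G)_i$; reading off the radix-$M$ digits of $(\mathrm y_\G)_i = D\mathrm R + \sum NDM^\ell$ (after the shift used in the proof of Theorem~\ref{thm.BasesOfComputation}) recovers the $i$-th row of $X_\G$ using only local data. Hence each agent can independently reconstruct one row of $X_\G$, and thus one row of $X_\G^{-1}$ is what is needed for its own Laplacian row; alternatively, one broadcasts/gossips the $n$ rows so that every agent holds $X_\G$ and inverts locally. Either way no agent needs global output measurements beyond one round of communication, which is the sense of ``distributed'' used for the consensus-type reconstruction schemes cited in the introduction. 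For the complexity: the radix-$M$ digit extraction for all $n$ coordinates is $O(n^2)$ elementary operations (each coordinate is an $n$-digit number in base $M$), and the single matrix inversion $X_\G^{-1}$ — followed by the trivial subtraction of $A$ — costs $O(n^3)$; reading off the support of the resulting matrix is $O(n^2)$. So the dominant cost is the inversion, giving the claimed $O(n^3)$. The offline construction of $M$, and the offline derivation of the bounds $N, D$ from $A, B$, do not enter the online runtime.

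I expect the main obstacle to be making the bounds $N$ and $D$ on the numerators and denominators of the entries of $X_{\G} = (A + \E_\G B\E_\G^T)^{-1}$ genuinely uniform over all of $\mathfrak{G}$ and explicit in terms of $A$, $B$, $n$: one must bound the common denominator $D$ via $\det(A + \E_\G B\E_\G^T)$ (a rational number whose denominator divides the product of the denominators of the entries of $A$ and $B$, uniformly in $\G$ since $\G$ only changes which $\pm1$ incidence entries appear) and bound $N$ via Cramer's rule / Hadamard-type estimates on the cofactors, again uniformly in $\G$. Once this uniformity is in hand, the choice $M > (2N+1)D$ is legitimate and everything downstream is routine. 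A secondary subtlety is the $A=0$ case, where $X_\G$ is the (pseudo)inverse restricted to $\mathds 1^\perp$: here one works in the quotient, recovering $Y_\G = -X_\G^{-1}$ on $\mathds 1^\perp$, which still pins down the Laplacian since $\mathds 1^\perp = \ker(\E_\G B\E_\G^T)^\perp$; the rationality and bound arguments carry over verbatim with $\det$ replaced by the product of nonzero eigenvalues (the ``pseudo-determinant''), and the $O(n^3)$ count is unchanged.
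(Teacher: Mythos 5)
Your proposal is correct and takes essentially the same route as the paper: pick the radix-$M$ indication vector of Theorem~\ref{thm.BasesOfComputation}, recover each row of $X_\G$ locally from the corresponding agent's steady-state output (which is what makes the scheme distributed), and then invert to obtain the weighted Laplacian in $O(n^3)$. Your added care about making the bounds $N,D$ uniform over all graphs and about the $A=0$ case fills in details the paper leaves implicit, but does not change the argument.
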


\begin{rem}
In the case of LTI agents and controllers, finding the graph $\G$ is roughly equivalent to finding $X_\G$. The distributive nature of the algorithm is manifested in the fact that the $i$-th row of $X_\G$ is computed solely from the terminal output of the $i$-th agent.
\end{rem}
\begin{proof}
We pick an indication vector using the method of Theorem \ref{thm.BasesOfComputation}. The proof of Theorem \ref{thm.BasesOfComputation} gives an easy way to reconstruct $X_\G$'s $i$-th row from the output of the $i$-th agent, taking $O(n)$ time. Doing this for all agents takes $O(n^2)$ times, and gives us $X_\G$. Afterward, we can reconstruct the graph Laplacian using the formula $\E_\G B\E_\G^T = -A - X_\G^{-1}$ in $O(n^3)$ time, and then find the underlying grpah by looking at the non-zero off-diagonal entries of it. This completes the proof.
\end{proof}

\begin{note}
In the LTI case, we do not use look-up tables, but give a different solution relying on bases of computation. This allows us to have only a polynomial increase in time, and exempts us from worrying about storage issues. 
\end{note}

\section{Case Study : Neural Network}
We consider a continuous neural network, as appearing in \cite{NeuralNetworkExample}, on $n$ neurons of one species. The governing ODE has the form,
\begin{align}\label{neural}
\dot{V_i} = -\frac{1}{\tau_i}V_i + b_i \sum_{j\sim i} (\tanh(V_j)-\tanh(V_i)) + \mathrm{w}_i
\end{align}
where $V_i$ is the voltage on the $i$-th neuron, $\tau_i > 0$ is the self-correlation time of the neurons, $b_i$ is a coupling coefficient, and the external input $w_i$ is any other input current to neuron $i$. We run the system with $10$ neuron. The correlation times were chosen randomly between $0.5_{sec}$ and $1_{sec}$. The homogeneity requirement on the network forces us to take equal $b_i$-s over all agents, and we choose $b_i = 0.1$. We should note that one can show that the agents, modeled by $\dot{x_i} = -\frac{1}{\tau_i}x_i + u_i ; y_i = \tanh(x_i)$, are output striclty MEIP, namely the following storage function can be used for to prove output-strict passivity with respect to $(\mathrm u_i,\tanh(\tau_i\mathrm u_i))$,
$ V_i (x_i) = \int_0^{x_i} \tanh(s)ds - \int_0^{\tau_i \mathrm u_i} \tanh(s)ds - \tanh(\tau_i \mathrm{u}_i)(x - \tau_i \mathrm u_i). $

We choose an indication vector as in the proof of Proposition \ref{prop.NetworkDetection}, and run the system with the original underlying graph, showing in Fig. \ref{Graph}. The output of the system can be seen in Fig. \ref{neural_net_traj}. We first run the system for 10 seconds (enough for convergence). After 10 seconds, the red edge in Fig. \ref{Graph} gets cut off.  We can see that the output of agent \#6 (in light blue) and agent \#10 (in yellow) change meaningfully, so we are able to detect the change in the underlying graph. After ten more seconds, another edge gets removed from the graph, this time the blue one. We can see that again the outputs of two agents, \#1 (in black) and \#2 (in pink), are changed by a measurable amount, allowing to detect the second change in the underlying graph, which is now unconnected. Finally, after a total of 20 seconds, we reintroduce both of the removed edges. We can see that the system has returned to its original steady-state. 

\begin{figure}[!h] 
   \centering
   \includegraphics[scale=0.35]{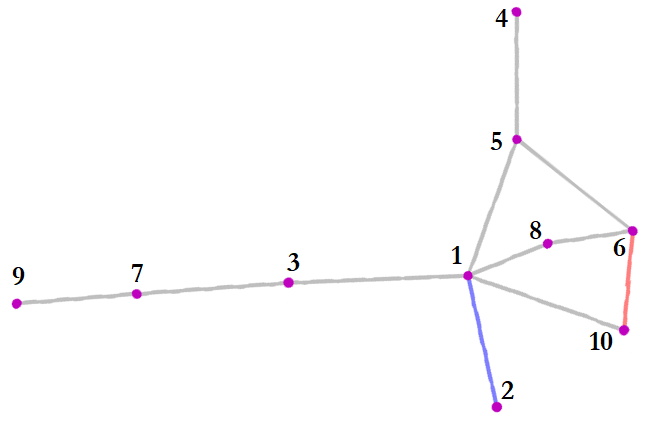}
    \caption{The interaction graphs simulated in the case study. The red edge is cut after 10 seconds, and the blue edge is cut after 20 seconds.}
    \label{Graph}
\end{figure}

\begin{figure} [!h] 
    \centering
    \includegraphics[scale=0.38]{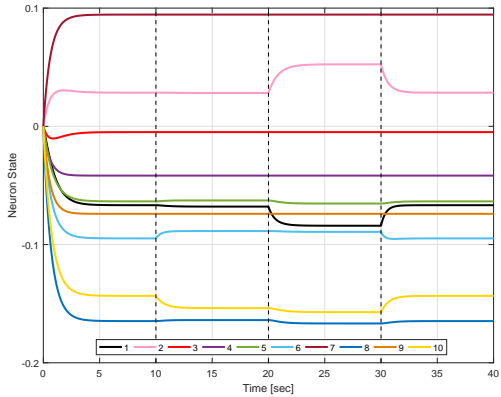}
    \caption{Trajectories of the neural network \eqref{neural} with changes in the underlying network.}
    \label{neural_net_traj}
   \vspace{-15pt}
\end{figure}

\section{Conclusion}
In this work we presented a procedure operating on a network system that allows for the reconstruction of the underlying network with no prior knowledge on it, but only on the agents. This was done through the novel notion of indication vectors, that were achieved for general maximally equilibrium-independent passive agents, allowing for detection of the underlying network in a very general case. We have found stronger results for LTI agents, allowing a distributed cubic-time reconstruction of the underlying network, while dealing with numerical errors present in the system. We have exhibited the use of these indication vectors in a network detection algorithm, and demonstrated the results in a simulation.

\bibliographystyle{ieeetr}
\bibliography{main}

\appendix
This appendix is dedicated to the proof of Corollary \ref{thm.RandEpsilon}, as well as a lemma for the proof of Theorem \ref{Thm.RandMethod}. We start with the latter.
\begin{lem} \label{lem.Implicit}
Let $F:\R^n \to \R^m$ be a smooth function, and consider the set $Z=\{x:\ F(x) = 0\}$. Suppose that the differential $\nabla F(x)$ is not the zero matrix at any point $x\in Z$. Then $Z$ is a zero-measure set. 
\end{lem}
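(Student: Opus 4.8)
The plan is to reduce the statement to the classical fact that the zero set of a single real-valued function with nowhere-vanishing gradient is a measure-zero hypersurface, and then to patch local estimates together with a countable subcover argument. The point to notice at the outset is that the hypothesis $\nabla F(x)\neq 0$ only guarantees that $\nabla F$ has rank at least one on $Z$, \emph{not} that $F$ is a submersion, so one cannot directly conclude that $Z$ is an $(n-m)$-dimensional submanifold; instead one works with a single well-chosen scalar component of $F$.

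First I would fix an arbitrary $x\in Z$. Since the matrix $\nabla F(x)$ is nonzero, some entry is nonzero, say $\frac{\partial F_i}{\partial x_j}(x)\neq 0$ for some $i\in\{1,\dots,m\}$ and $j\in\{1,\dots,n\}$. Consider the scalar component $F_i:\R^n\to\R$. Then $\nabla F_i(x)\neq 0$, and by continuity of the partial derivatives there is an open ball $U_x$ about $x$ on which $\nabla F_i$ never vanishes, in fact on which $\frac{\partial F_i}{\partial x_j}$ keeps its sign. On $U_x$ the map $F_i$ is a submersion onto $\R$, so by the implicit function theorem, after possibly shrinking $U_x$, the level set $\{y\in U_x:\ F_i(y)=0\}$ is the graph of a smooth function expressing the coordinate $x_j$ in terms of the remaining $n-1$ coordinates. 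By Fubini's theorem the graph of any continuous function of $n-1$ variables has $n$-dimensional Lebesgue measure zero, hence $\{y\in U_x:\ F_i(y)=0\}$ is null. Since $Z\cap U_x\subseteq\{y\in U_x:\ F_i(y)=0\}$, we conclude that $Z\cap U_x$ has measure zero.

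Finally I would globalize. The family $\{U_x\}_{x\in Z}$ is an open cover of $Z$, and since $\R^n$ is second countable (equivalently, Lindel\"of), there is a countable subfamily $\{U_{x_k}\}_{k\ge 1}$ still covering $Z$. Then $Z=\bigcup_{k\ge 1}(Z\cap U_{x_k})$ is a countable union of measure-zero sets, hence itself of measure zero. The only genuinely delicate step is the local one — recognizing that $Z$ is contained in the hypersurface $\{F_i=0\}$ cut out by a component with non-vanishing gradient — after which the argument is routine measure theory and second countability; no quantitative estimate is needed.
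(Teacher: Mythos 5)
Your proof is correct and follows essentially the same route as the paper's: select a scalar component $F_i$ with nonvanishing gradient near the point, apply the implicit function theorem and Fubini to see that the local zero set is a null graph, and then take a countable union. The only (immaterial) difference is in the globalization step, where you invoke the Lindel\"of property of $\R^n$ to extract a countable subcover directly, while the paper exhausts $Z$ by the compact sets $Z\cap B_r$ and uses finite subcovers; both are standard and valid.
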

\begin{proof}
We denote the coordinates of $F$ by $F=(F_1,...,F_m)$. We also let $Z_j = \{x:\ F_j(x) = 0\}$ for $j=1,\ldots,m$, so that $Z \subseteq Z_j$. Let $x=(x_1,\ldots,x_n)\in Z$. We know that $\nabla F(x) \neq 0$, so there's some $j$ such that $\nabla F_j(x) \neq 0$ in a small neighborhood of $x$. Thus, by the implicit function theorem, we can express one coordinate as a smooth function of the others. For ease of writing, assume without loss of generality that its the $n$-th coordinate, and let $\phi$ be the smooth function such that near $x$, the set $Z_j$ is given by $y_n = \phi(y_1,...,y_{n-1})$. 

More precisely, we can find a small cube $Q$ containing $x$ such that  the set $Z_j\cap Q$ is given by $y_n = \phi(y_1,...,y_{n-1})$. If we attempt to compute the volume of $Z_j\cap Q$, we can do so using the integral $\int_Q 1_{y_n = \phi(y_1,...,y_{n-1})}(y)dx$, where $1_E$ is the indicator function of $E$. This integral is obviously zero due to Fubini's theorem, and the fact that given $y_1,\ldots,y_{n-1}$, only one $y_n$ can satisfy the equation $y_n = \phi(y_1,...,y_{n-1})$. Thus the volume of $Z\cap Q$, which is smaller than the volume of $Z_j \cap Q$ as $Z_j \subset Z$, nulls.

Up to now, we showed that if $x\in Z$ the there exists some small open set $Q=Q_x$ such that $Z\cap Q_x$ is of measure zero. If we let $Ball_r$ denote the closed ball of radius $r$ around the origin, then $Z\cap B_r$ is compact, and $\{Q_x :\ x\in Z\cap B_r\}$ is an open cover. Thus $Z\cap B_r$ is contained in the union of finitely many sets of the form $Z\cap Q_x$, each of them having zero measure, implying that $Z\cap B_r$ is zero measure. We note that $Z$ is the countable union of the sets $Z\cap B_1, Z\cap B_2, Z\cap B_3,...$, meaning that it is the countable union of zero measure sets, hence a zero measure set itself.
\end{proof}

We now shift focus toward the proof of Theorem \ref{thm.RandEpsilon}. We first prove a lemma:
\begin{lem}
For any connected graphs $\G,\HH$, $\bar\sigma(X_\G-X_\HH) \le C_{\G,A}+C_{\HH,A}$ where we define
\begin{align*}
C_{\G,A} = \begin{cases} \frac{1}{\underline{\sigma}(A+\E_\G B\E_\G^T)} & A\neq 0 \\
\frac{1}{\underline{\sigma}(Y_\G)} & A=0 \end{cases}.
\end{align*}
\end{lem}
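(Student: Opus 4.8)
The plan is to reduce the bound to the triangle inequality for the spectral norm together with the standard identity relating the largest singular value of an inverse to the smallest singular value of the original operator. First I would write $\bar\sigma(X_\G - X_\HH) \le \bar\sigma(X_\G) + \bar\sigma(X_\HH)$, so that it suffices to show $\bar\sigma(X_\G) \le C_{\G,A}$ for every connected graph $\G$ (and symmetrically for $\HH$).

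For the case $A \neq 0$, recall from Proposition \ref{prop.LTIinvert} that $S_\G := A + \E_\G B \E_\G^T$ is invertible and $X_\G = X_{\G,A\neq 0} = S_\G^{-1}$. Since $S_\G$ is symmetric positive definite, its singular values coincide with its eigenvalues, and the singular values of $S_\G^{-1}$ are their reciprocals; hence $\bar\sigma(X_\G) = \bar\sigma(S_\G^{-1}) = 1/\underline\sigma(S_\G) = C_{\G,A}$. This gives the claim immediately in this case.

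For the case $A = 0$, recall that $X_\G = X_{\G,A=0} = Y_\G^{-1}\,\Proj_{\mathds{1}^\perp}$, where $Y_\G$ is the restriction of the weighted Laplacian $\E_\G B \E_\G^T$ to the invariant subspace $\mathds{1}^\perp$ and is invertible there (connectedness of $\G$). I would bound $\bar\sigma(X_\G) \le \bar\sigma(Y_\G^{-1})\,\bar\sigma(\Proj_{\mathds{1}^\perp})$; since an orthogonal projection has unit spectral norm, this is $\bar\sigma(Y_\G^{-1})$, which equals $1/\underline\sigma(Y_\G) = C_{\G,0}$ by the same reciprocal-of-eigenvalues argument applied to the symmetric positive definite operator $Y_\G$ on $\mathds{1}^\perp$. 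Combining the two cases with the initial triangle inequality yields $\bar\sigma(X_\G - X_\HH) \le C_{\G,A} + C_{\HH,A}$.

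The only point requiring a little care—and the closest thing to an obstacle—is the $A=0$ case: here $X_\G$ is singular on all of $\R^n$ (it annihilates $\mathds{1}$), so one must be precise that $\underline\sigma(Y_\G)$ refers to the smallest singular value of the \emph{restricted} operator on $\mathds{1}^\perp$, not of $X_\G$ on $\R^n$; once this is fixed, the factorization through $\Proj_{\mathds{1}^\perp}$ makes the bound routine. Everything else is a direct application of submultiplicativity and the singular-value characterization of inverses.
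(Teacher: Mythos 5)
Your proof is correct and follows essentially the same route as the paper's: the triangle inequality for the spectral norm followed by the identity $\bar\sigma(C^{-1})=1/\underline\sigma(C)$. You are somewhat more careful than the paper in the $A=0$ case (noting that the projection onto $\mathds{1}^\perp$ has unit norm and that $\underline\sigma(Y_\G)$ must be taken on the restricted subspace), but this is a refinement of the same argument rather than a different one.
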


\begin{proof}
First, $\bar{\sigma}(X_\G-X_\HH) \le \bar{\sigma}(X_\G)+\bar{\sigma}(X_\HH)$ as $\bar\sigma(\cdot)$ is a norm on the space of matrices. The proof is now completed using the formula $\bar\sigma(C^{-1})=\frac{1}{\underline\sigma(C)}$.
\end{proof}

We can now prove Theorem \ref{thm.RandEpsilon}.

\begin{proof}
The distance between the terminal state associated with $\G$ and the one associated with $\HH$ is $\|(X_\G-X_\HH)^2\|$, where $||\cdot||$ is the standard Euclidean norm. We fix some $\G,\HH$ and let $F=X_\G-X_\HH$. We use the SVD decomposition to write $F^TF = U^TDU$ where $U$ is an orthogonal matrix and $D=\mathrm{diag}(\sigma_1^2,...,\sigma_{n}^2)$ is a diagonal matrix entries being the singular values of $F$. Using the fact that $\mathrm w$ and $U^{-1}\mathrm w$ both distribute according to $\PP$, we see that: \small 
\begin{align*}
&\PP(||F\mathrm w||>\delta)=\PP(||F\mathrm w||^2 > \delta^2) =\PP(||FU^T\mathrm w||^2 > \delta^2)=\\
&\PP(\mathrm w^TUF^TFU^T\mathrm w > \delta^2) = \PP(\mathrm w^TD\mathrm w > \delta^2) =\PP(\sum_{i=1}^{|n|} \sigma_i^2\mathrm w_i^2 > \delta^2).
\end{align*}  \normalsize
Now, we note that the entries $\mathrm w_i$ of $\mathrm w$ are all standard Gaussian random variables, and that they are independent. Thus, we can estimate: \small
\begin{align*}
\PP(\sum_{i=1}^n \sigma_i^2\mathrm w_i^2 > \delta^2)\ge \PP(|\mathrm w_1| \ge \frac{\delta}{\sigma_1}) = 2-2\Phi\left(\frac{\delta}{\bar\sigma(F)}\right)
\end{align*} \normalsize
Thus, we know that for any pair of graphs $\G,\HH$, the chance that $||X_\G \mathrm w - X_\HH \mathrm w||> \delta$ is at least $2-2\Phi(\frac{\delta}{\bar\sigma(X_\G-X_\HH)})$.

Now, we use the lemma and the fact that $\Phi$ is monotone increasing to bound the probability that  $||X_\G \mathrm w - X_\HH \mathrm w||> \delta$ from below by $2-2\Phi(\frac{\delta}{C_{\G,A}+C_{\HH,A}})$. We bound each $C_{\G,A}$ from above. First, if $A\neq 0$ then $\underline{\sigma}(A+\E_\G B\E_\G^T) \ge a$, and otherwise $\underline{\sigma}(Y_\G) = \min\{b_{ij}\}\lambda_2(\G) \ge \min\{b_{ij}\}{n \choose 2}^{-1}$ \cite{Rad_Jalili_Hasler}. Because there are a total of $2^{n \choose 2}$ possible graphs on $n$ nodes, thus a total of $(2^{n \choose 2})^2\le 2^{n^2}$ of pairs $\G,\HH$ to consider, and by the union bound, the chance that $\e<\delta$ is no more than \small
\begin{align*}
\sum_{\G,\HH}(1-\Phi(-\frac{\delta}{\bar\sigma(X_\G-X_\HH)})) \le 2^{n^2}\bigg(2\Phi\bigg(\frac{\delta}{2\beta}\bigg)-1\bigg).
\end{align*} \normalsize
\end{proof} 

\end{document}